\tikzset{
  shadowed/.style={preaction={
      transform canvas={shift={(2pt,-1pt)}},draw opacity=.2,#1,preaction={
        transform canvas={shift={(3pt,-1.5pt)}},draw
        opacity=.1,#1,preaction={
          transform canvas={shift={(4pt,-2pt)}},draw
          opacity=.05,#1,
  }}}},
}
\newif\if@restonecol
\newtheorem{theorem}{Theorem}	
\newtheorem{definition}{Definition}
\newtheorem{lemma}{Lemma}
\newtheorem{remark}{Remark}
\newtheorem{example}{Example}
\def\R{\mathbb{R}}
\newcommand{\U}{\mathbf{U}}
\tikzset{
  ashadow/.style={opacity=.25, shadow xshift=0.07, shadow yshift=-0.07},
}
\newenvironment{proof}{\paragraph{Proof:}}{\hfill$\square$}
\newcommand{\remove}[1]{}
\newcommand{\blue}[1]{\textcolor{black}{#1}}
\def\fskip#1{}
\def\P{\mathcal P}
\def\R{\mathbb{R}}
\def\E{\mathbb E}
\def\expect{\widehat}
\def\ee{\mathbf e}
\def\B{\mathcal B}
\def\x{\mathbf x}
\def\boldpsi{\boldsymbol\psi}
\def\boldpi{\boldsymbol\pi}
\def\y{\mathbf y}
\def\z{\mathbf z}
\def\v{\mathbf v}
\def\U{\mathcal U}
\def\allzero{\mathbf{0}}
\def\allone{\mathbf{1}}
\def\Pr{\text{Pr}}
\DeclarePairedDelimiterX{\infdivx}[2]{(}{)}{%
  #1\;\delimsize\|\;#2%
}
\def\Pr{\text{Pr}}
\def\N{\mathbb{N}}
\def\u{\mathbf u}
\begin{document}

\title{A Random Adaptation Perspective on Distributed Averaging}

 \author{Rohit Parasnis}
 \author{Ashwin Verma}
 \author{Massimo Franceschetti}
 \author{Behrouz Touri\thanks{Email: rparasni,a1verma,mfranceschetti,btouri@ucsd.edu}} 
 \affil{Department of Electrical and Computer Engineering, University of California San Diego}
\date{}

\maketitle

\begin{abstract}
We propose a random adaptation variant  of time-varying distributed averaging dynamics in discrete time. We show that this leads to novel interpretations of fundamental concepts in distributed averaging, opinion dynamics, and distributed learning. Namely, we show that the ergodicity of a stochastic chain is equivalent to the \textit{almost sure} (\textit{a.s}.) finite-time agreement attainment in the proposed random adaptation dynamics. Using this result, we provide a new interpretation for the \textit{absolute probability sequence} of an ergodic chain. We then modify the base-case dynamics into a time-reversed inhomogeneous Markov chain, and we show that in this case ergodicity is equivalent to the uniqueness of the limiting distributions of the Markov chain. Finally, we introduce and study a time-varying random adaptation version of the Friedkin-Johnsen model and a rank-one perturbation of the base-case dynamics.
\end{abstract}

\section{INTRODUCTION}
Distributed averaging is a central mechanism to information mixing in distributed optimization~\cite{nedic2007rate,rabbat2004distributed}, distributed parameter estimation and signal processing~\cite{predd2006distributed, cattivelli2009diffusion, jadbabaie2012non, lalitha2014social, nedic2016tutorial, parasnis2022non, rangi2018multi},  decentralized control of robotic networks~\cite{bullo2008distributed}, and opinion dynamics~\cite{degroot1974reaching,acemoglu2011opinion, proskurnikov2017tutorial, proskurnikov2018tutorial, hegselmann2002opinion, parasnis2021convergence}. Hence, a variety of distributed averaging dynamics have been studied till date within different mathematical frameworks~\cite{xiao2004fast, cao2011necessary,el2016design}.

In particular, distributed averaging algorithms over time-varying networks are often analyzed using chains/sequences of stochastic matrices (a class of non-negative matrices). Several properties of such chains, such as \textit{ergodicity} and \textit{reciprocity}, have been studied in detail~\cite{blackwell1945finite, chatterjee1977towards, touri2012backward, touri2012product, martin2015continuous,bolouki2015consensus,parasnis2022towards, de2022ergodicity}. Despite this abundance of literature, interpretations of some key concepts in this area, such as Kolmogorov's absolute probability sequences, have remained elusive.

On the other hand, stochastic matrices   can be interpreted by noting that every row of such   matrices gives the probability mass function of a discrete random variable. Thus motivated, we introduce a \textit{random adaptation} framework in which each entry of a stochastic matrix denotes the \textit{probability} that a node $i$ adapts to the state of a node $j$, rather than the \textit{weight} assigned by $i$ to $j$. 
\blue{That is,} we propose random adaptation variants of some classical discrete-time distributed averaging dynamics. We then study the proposed dynamics to interpret in a new light certain concepts that are central to the study of stochastic chains and averaging dynamics. Our contributions are as follows:
\begin{enumerate}[leftmargin=0cm,itemindent=.5cm,labelwidth=\itemindent,labelsep=0cm,align=left]
    \item  \textbf{\textit{An Interpretation of Ergodicity:}} 
   
    We show that the classical notion of ergodicity is \textit{equivalent} to an intuitive condition of agreement in finite time in the random adaptation dynamics. \blue{This differs from the classical scenario of distributed averaging dynamics, where ergodicity does not necessarily ensure finite-time agreement.} 
    \item \textit{\textbf{An Interpretation of Absolute Probability Sequence:}} Using the above characterization of ergodicity, we interpret the \textit{absolute probability sequence} of an ergodic chain as the  limiting probability distribution of the common value attained by all the nodes in the proposed dynamics.
    \item \textbf{\textit{Ergodicity vis-a-vis Uniqueness of the Limiting Distribution:}} We propose a time-reversed, transposed variant of the aforementioned dynamics and use it to show that the limiting distribution of the state vector is unique if and only if the given  stochastic chain is ergodic. \blue{This leads to a new insight: absolute probability sequences are to ergodic chains what stationary distributions are to regular stochastic matrices.}
    \item \textit{\textbf{Asymptotic Behaviors of Variants:}} Finally, we discuss the random adaptation interpretation of the well-known Friedkin-Johnsen model of opinion dynamics~\cite{friedkin1990social} as well as rank-one perturbations of  the base-case dynamics. In both the cases, we study the limiting probability distributions of the agents' states/opinions.
\end{enumerate}
In addition, in Section~\ref{sec:simulations} we provide numerical examples to illustrate and interpret some of our results on the base-case dynamics as well as on our random adaptation variants of the Friedkin-Johnsen model.

\blue{\textit{Related works:} This paper is closely related to the voter model~\cite{clifford1973model,holley1975ergodic} and its many extensions (e.g.,~\cite{sood2005voter,schneider2009generalized,castellano2009nonlinear,granovsky1995noisy,congleton2004median,mobilia2007role}), which apply to social networks and describe the processes of individuals randomly adapting to their neighbors' political preferences. However, our work differs from these prior works in at least two fundamental ways. First, the voter model and most of its variants assume a limit on the number of agents that update their opinions synchronously, whereas our model does not make any assumptions on the level of synchrony among the agents' updates (note that our update rules can be used to prevent an arbitrary set of  agents from updating their opinions by simply setting the corresponding diagonal entries of the associated stochastic matrices to 1). For instance, the voter model with zealots~\cite{acemoglu2013opinion} differs from our random adaptation variant of the Friedkin-Johnsen model in that, in the former, at most one agent updates its state in any given update period. Second, unlike this paper, none of the prior works aims to unravel the properties of the stochastic chains or matrices associated with the random dynamics proposed therein.}

Besides, some of our results may be {related to} 
the well-studied {duality} between coalescing random walks and voter dynamics~\cite{cox1989coalescing,aldous1995reversible}.

\textit{Notation: } In this paper $\N:=\{1,2,\ldots\}$, $\N_0:=\N\cup\{0\}$,  $\R$ denotes the set of real numbers, $\R^n$ denotes the set of $n$-dimensional real-valued column vectors, and $\R^{n\times n}$ denotes the set of $n\times n$ real-valued matrices. For $n\in\N$, we let ${[n]:=\{1,2,\ldots,n\}}$. For a vector $\blue{\v}\in\R^n$, $v_i$ denotes its $i$th entry, and $b_{ij}$ denotes the $(i,j)$th entry of a matrix ${B\in\R^{n\times n}}$. All matrix and vector inequalities are assumed to hold entry-wise.

For a given $n\in\N$ \blue{whose value is clear from the context}, let $O,I$ denote the $n\times n$ matrix with all zero entries and the identity matrix, respectively. We denote the column vectors with all zero entries and all one entries in $\R^n$ by $\allzero$ and $\allone$, respectively. $\blue \ee^{(i)}\in\R^n$ denotes the $i$th canonical basis vector. 

For a matrix $A\in\R^{n\times n}$ and a set $S\subseteq [n]$, let $A_S$ be the principal sub-matrix of $A$ corresponding to the rows and columns indexed by $S$. Let $\bar S:=[n]\setminus S$ be the complement of $S$ with respect to $[n]$, and for sets $S,T\subseteq [n]$, let $A_{ST}$ denote the sub-matrix of $A$ corresponding to the rows indexed by $S$ and the columns indexed by $T$.

We say $\blue{\v}\in\R^n$ is \textit{stochastic} if $\v\geq \allzero$ and $\allone^T\v=1$, where the superscript $^T$ denotes transposition. A matrix $A\in\R^{n\times n}$ is called \textit{row-stochastic} or simply \textit{stochastic} if each row of \blue{$A$} is stochastic. We let $\P_n$ (respectively, $\P_{n\times n}$) denote the set of all stochastic vectors (respectively, matrices) in $\R^n$ (respectively, $\R^{n\times n}$). For a sequence $\{A(t)\}_{t=0}^\infty$ and indices $t_1<t_2$, we let
$
    {A(t_2:t_1):=A(t_2-1)A(t_2-2)\cdots A(t_1)}
$
with the convention that $A(t:t):=I$ for all $t\in\N_0$. 

All random objects in this work are defined with respect to an underlying probability space $(\Omega,\B,\Pr)$. With an abuse of terminology, by $\blue{Z}(t)\in \R^n$ for a random process $\{\blue{Z}(t)\}$ we mean $\blue{Z}(t,\omega)\in \R^n$ for all $t\in \N_0$ and $\omega\in\Omega$. \blue{We use }$\E[\cdot]$ \blue{to }denote the expectation operator with respect to this probability space. We use the notation $\expect Z$ to denote the  expectation of a random variable/matrix/vector $Z$.

\section{RANDOM ADAPTATION DYNAMICS}~\label{sec:formulation}
In \blue{distributed} averaging dynamics, we are given a sequence of stochastic matrices $\{{Q}(t)\}\blue{_{t=0}^\infty}$, and we are interested in studying the dynamics
\begin{align}\label{eqn:averaging}
    \blue{\x}(t+1)=Q(t)\blue\x(t), 
\end{align}
for some initial time $t_0\in \N_0$ and initial \blue{state} $\x(t_0)\in \R^n$. As mentioned in the introduction, each row of $Q(t)$ is a stochastic vector that can be viewed as the probability mass function of a certain random variable taking $n$ values. Our random adaptation viewpoint formulates a very natural sequence of random variables that exhibits this behavior. Consider $n$ agents assuming a random state $\blue{x}_i(t)\in\R$ (more precisely, $\blue{x}_i(t,\omega)\in\R$ where $\omega\in \Omega$) evolving over discrete time $t\in\N_0$. Let the starting time $t_0\in\N_0$ and the initial states $\blue{\x}(t_0)\in \R^n$ be an arbitrary deterministic vector. For a given sequence $\{Q(t)\}$ of stochastic matrices,  the random adaptation scheme is \blue{defined} as follows: At time $t\geq t_0$, agent $i$ adopts agent $j$'s state with probability $q_{ij}(t)$ independently of other agents as well as every agent's past choices and states, i.e., ${\Pr(\blue{x}_i(t+1) = \blue{x}_j(t)\mid \blue{\x}(t),\ldots, \blue{\x}(t_0))=q_{ij}(t)}$ for all $i,j\in[n]$. Therefore, we can write 
\begin{align}\label{eq:main}
    \blue{\x}(t+1) = A(t) \blue{\x}(t),
\end{align}
where $\blue{\x}(t)\in\R^n$ is the random state vector (whose $i$-th entry is $\blue{x}_i(t)$) and $A(t)\in\R^{n\times n}$ is a binary random stochastic matrix, i.e., $\{a_{ij}(t):i,j\in [n]\}$ are Bernoulli random variables with parameters $\expect a_{ij}(t)=q_{ij}(t)$ for all $i,j\in[n]$.

We now observe a few important properties of~\eqref{eq:main}: (i) The random process $\{\blue\x(t)\}_{t=0}^\infty$ is a non-homogeneous Markov chain with state space size at most $n^n$ as the state of each agent at every time \blue{instant} is in $\{x_i(t_0)\mid i\in [n]\}$. (ii) For each $t\in\N_0$, the rows of $A(t)$ are independent random vectors. (iii) The random matrices $\{A(t)\}_{t=0}^\infty$ are independent and hence, for each $t\in\N$, $A(t)$ is independent of $\blue\x(t)$.
\section{MAIN RESULTS}\label{sec:main_result}
We first define two properties that will be shown to be closely related. The first property relates to stochastic chains.


\begin{definition} [\textbf{Ergodicity\blue{~\cite{chatterjee1977towards,touri2012product}}}] \label{def:ergodicity} A deterministic (non-random) stochastic chain $\{Q(t)\}_{t=0}^\infty$ is said to be \textit{ergodic} if, for every $t_0\in\N_0$, there exists a vector $\blue \boldpsi (t_0)\in\P_n$ 
such that $\lim_{t\to\infty} Q(t:t_0)=\allone  \blue\boldpsi^T(t_0)$.
\end{definition}

Put differently, a stochastic chain $\{Q(t)\}_{t=0}^\infty$ is called ergodic if every backward matrix product $Q(t:t_0)$ converges to a rank-one matrix that has identical rows.

The second property relates to the random adaptation dynamics~\eqref{eq:main}. 
\begin{definition} [\blue{\textbf{Finite Agreement}}]
We say that the adaptation dynamics~\eqref{eq:main} has an \textit{a.s.} finite agreement property if for all initial time\blue{s} $t_0\in \N_0$ and initial \blue{states} $\blue\x(t_0)\in\R^n$, \blue{there exist a random scalar $y= y(t_0, \x(t_0))$ and a random (stopping) time $T\geq t_0$ such that $\x(t)=y\allone$ for all $t\geq T$}. 
\end{definition}

In other words, the random adaptation dynamics~\eqref{eq:main} has the finite agreement property if at \blue{some} time after the initiation of the dynamics, all the agents \blue{adopt} the same state. 

{Note that the ergodicity of $\{Q(t)\}$ does not imply finite agreement for the deterministic dynamics \eqref{eqn:averaging}. For example, suppose $n=2$, let $\x(0)={\ee}^{(1)}$, and consider the static chain defined by
$
    Q(t) =
    \begin{pmatrix}
        p & 1-p\\
        1-q & q\\
    \end{pmatrix}
$
for all $t\in\N_0$, where $p,q\in (0,1)$. Then it can be verified that $\{Q(t)\}_{t=0}^\infty$ is ergodic. However, we also have ${ x_1(t)-x_2(t)=(p+q-1)^t}$ for all $t\in\N$, which implies that no agreement is reached in finite time unless $p+q=1$.

On the contrary, if $\blue\x(t_0)\in \{ \blue\ee^{(1)},\blue\ee^{(2)}\}$ the random adaptation variation \eqref{eq:main} of the same static chain ensures that the two agents reach an agreement at time ${T:=\inf\{t\geq t_0: x_1(t)\oplus x_2(t)=0\}}$, where $\oplus$ denotes \blue{the} \textit{exclusive OR} operation. Since $T$ is a geometric random variable with the parameter ${p(1-p)+q(1-q)>0}$, it is finite \textit{a.s.} In fact, even for $n\neq 2$, every ergodic chain guarantees finite agreement for all initial conditions, as we now show.}

\begin{theorem}\label{thm:main}
The random adaptation dynamics~\eqref{eq:main} has the finite agreement property if and only if ${\{\expect A(t)\}_{t=0}^\infty=\{Q(t)\}_{t=0}^\infty}$ is an ergodic chain.
\end{theorem}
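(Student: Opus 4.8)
The plan is to exploit throughout that each $A(t)$ is a $0$–$1$ row-stochastic matrix (exactly one $1$ per row, i.e. a random map), so that every backward product $A(t:t_0)$ is again $0$–$1$ row-stochastic and represents the random map $i\mapsto$ ``ancestor of $i$ at time $t_0$''. Consequently $A(t:t_0)=\allone\,(\ee^{(J)})^{T}$ is rank one iff all agents trace back to a single ancestor $J\in[n]$, and the (initial-condition-free) finite agreement property is exactly the statement that this happens in finite time. Since $A(t)\allone=\allone$, once $A(t:t_0)$ is rank one it stays rank one, so the events $E_{t_0,t}:=\{A(t:t_0)\text{ is rank one}\}$ increase in $t$ and finite agreement is equivalent to $\Pr(E_{t_0,t})\to 1$. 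Finally, independence of the rows and of the $\{A(t)\}$ gives $\E[A(t:t_0)]=Q(t:t_0)$, the bridge to the chain $\{Q(t)\}$.

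For the easy implication (finite agreement $\Rightarrow$ ergodicity) I would argue as follows. Under finite agreement, for almost every $\omega$ the matrix $A(t:t_0)$ is eventually constant, equal to $\allone\,(\ee^{(J)})^{T}$ for a random $J=J(t_0)\in[n]$; hence $A(t:t_0)\to\allone\,(\ee^{(J)})^{T}$ a.s. Because all entries lie in $[0,1]$, bounded convergence yields $Q(t:t_0)=\E[A(t:t_0)]\to\allone\,\boldpsi^{T}(t_0)$, where $\boldpsi(t_0):=\E[\ee^{(J(t_0))}]\in\P_n$ is simply the law of the common ancestor $J(t_0)$. This is precisely ergodicity, and it incidentally identifies $\boldpsi(t_0)$ as the limiting distribution of the agreement value.

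The substantive direction is ergodicity $\Rightarrow$ finite agreement, and I would reduce it to pairwise coalescence: by a union bound $\Pr(E_{t_0,t}^{c})\le\sum_{i<k}\Pr(\text{rows }i,k\text{ of }A(t:t_0)\text{ differ})$, so it suffices to show each pairwise disagreement probability tends to $0$. I encode all of these in the symmetric matrix $G(t):=\E[A(t:t_0)A(t:t_0)^{T}]$, whose $(i,k)$ entry is exactly the collision probability that rows $i$ and $k$ coincide and whose diagonal is $\allone$. Independence of $A(t)$ from $A(t:t_0)$ gives $G(t+1)=\E[A(t)\,G(t)\,A(t)^{T}]$; expanding with the row-independence of $A(t)$, the matrix $H(t):=\allone\allone^{T}-G(t)\ge 0$ of non-collision probabilities obeys, for $i\ne k$, the sub-stochastic recursion $H(t+1)_{ik}=\sum_{a\ne b}q_{ia}(t)q_{kb}(t)H(t)_{ab}$ with $H(t)_{ii}=0$ and $H(t_0)=\allone\allone^{T}-I$. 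This is the two-lineage coalescent killed on the diagonal, and the goal becomes $h(t):=\max_{i\ne k}H(t)_{ik}\to 0$.

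The main obstacle, and the crux of the argument, is that matching marginals are not enough: ergodicity makes every row of $Q(t:t_0)$ converge to the common $\boldpsi(t_0)$, yet two lineages with the same marginal law need not coincide unless one genuinely exploits the coalescent coupling. I would control this through a block decomposition keyed to the Dobrushin coefficient $\tau(B):=1-\min_{i,k}\sum_{l}\min(b_{il},b_{kl})$. A comparison with independent lineages, proved by an entrywise induction on the shared recursion (using that $G$ carries $1$'s on the diagonal while $Q(t:t_0)Q(t:t_0)^{T}$ carries $\|\text{row}\|_2^2\le 1$), gives that on any block $[s,s']$ the within-block collision probability from any pair is at least $(BB^{T})_{ik}$ with $B:=Q(s':s)$. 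If $\tau(B)\le\tfrac12$ then $\sum_l\min(b_{il},b_{kl})\ge\tfrac12$ for all $i,k$, so by Cauchy–Schwarz $(BB^{T})_{ik}=\sum_l b_{il}b_{kl}\ge\sum_l\min(b_{il},b_{kl})^{2}\ge\tfrac{1}{4n}$, uniformly in $(i,k)$. Hence over such a block the killed operator is nonnegative with all row sums at most $1-\tfrac{1}{4n}$, so $h(s')\le(1-\tfrac{1}{4n})\,h(s)$. Ergodicity, applied with each starting time, guarantees that for every $s$ there is a finite $s'$ with $\tau(Q(s':s))\le\tfrac12$ (since $\tau(Q(t:s))\to 0$); tiling $[t_0,\infty)$ by infinitely many consecutive such blocks $t_0=s_0<s_1<s_2<\cdots$ and using that $h$ is non-increasing yields $h(s_m)\le(1-\tfrac{1}{4n})^{m}\to 0$. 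Therefore every pairwise disagreement probability vanishes, $\Pr(E_{t_0,t})\to 1$, and finite agreement holds a.s.
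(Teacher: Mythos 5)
Your proposal is correct, and its hard direction (ergodicity $\Rightarrow$ finite agreement) takes a genuinely different route from the paper's. The paper works with a \emph{global} agreement event: from the limit vector $\boldpsi(t_0)$ it extracts a column $\ell(t_0)$ and a time $t_1$ with $Q_{i\ell(t_0)}(t_1:t_0)\geq \tfrac{1}{2n}$ for all $i$, invokes its positive-correlation Lemma~\ref{lem:positive_correlation} (an induction over the block) to get $\Pr\bigl(A(t_1:t_0)=\allone(\ee^{(\ell(t_0))})^T\bigr)\geq (2n)^{-n}$, and then applies the second Borel--Cantelli lemma to independent blocks. You instead reduce to \emph{pairwise} coalescence by a union bound, encode collision probabilities in $G(t)=\E[A(t:t_0)A^T(t:t_0)]$, compare the coalescing pair against independent lineages by an entrywise induction --- this comparison plays exactly the role of the paper's Lemma~\ref{lem:positive_correlation}, since both exist to handle the correlations among rows of a backward product, which are not independent even though the rows of each $A(t)$ are --- and then obtain the Dobrushin-block contraction $h(s')\leq (1-\tfrac{1}{4n})h(s)$; monotonicity of the rank-one event (valid since $A(t)\allone=\allone$) replaces Borel--Cantelli. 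Each approach buys something: the paper's one-block event has probability only $(2n)^{-n}$, exponentially small in $n$, whereas your per-block contraction factor $1-\tfrac{1}{4n}$ is polynomially close to $1$, so your argument yields sharper quantitative control of disagreement probabilities and of the agreement time, and it makes explicit the coalescing-random-walk duality that the paper only alludes to in its introduction; the paper's argument is shorter once Lemma~\ref{lem:positive_correlation} is in hand, and its global-event structure feeds directly into Remark~\ref{rem:matrix_limit} (the a.s.\ matrix limit $\allone\boldpi^T(t_0)$ with binary $\boldpi$), which later results reuse. Your necessity direction is essentially the paper's (dominated/bounded convergence plus independence of $\{A(t)\}$), executed at the matrix level rather than via states $\x(t_0)=\ee^{(i)}$, which is if anything cleaner. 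One presentational point, not a gap: the equivalence ``finite agreement $\Leftrightarrow$ $A(t:t_0)$ eventually rank one'' needs, in the forward direction, an initial state with $n$ distinct entries (otherwise agreement can occur without the rows of $A(t:t_0)$ coinciding); since the finite agreement property quantifies over all initial states, this is available, but it deserves one explicit sentence.
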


\begin{proof}
We establish ergodicity first as a sufficient condition and then as a necessary condition for \textit{a.s.\ }finite agreement with an arbitrary initial condition $(t_0,\blue\x(t_0))$.

\subsubsection*{Proof of Sufficiency}
It suffices to show that an agreement occurs in the network infinitely often \textit{a.s}., because once an agreement is reached, the state vector stops evolving in time.

We first note that by Definition~\ref{def:ergodicity}, there exists a ${ \blue\boldpsi(t_0)\in\P_n}$ such that ${\lim_{t\to\infty} Q(t:t_0)=\allone  \blue\boldpsi^T(t_0)}$ \textit{a.s}. As ${ \blue\boldpsi^T(t_0)\allone=1}$, there exists an index $\ell(t_0)\in[n]$ such that ${\blue\psi_{\ell(t_0)}(t_0)\geq \frac{1}{n}}$, which implies that ${\lim_{t\to\infty} Q_{i\ell (t_0)}(t:t_0)\geq \frac{1}{n}}$ for all $i\in [n]$. Consequently, there exists a time $t_1\geq t_0$ such that $ Q_{i\ell(t_0)}(t_1:t_0)\geq \frac{1}{2n}$ for all $i\in [n]$. By Lemma~\ref{lem:positive_correlation}, 
 this implies that $\Pr\left( A_{[n]\{\ell(t_0)\}}(t_1:t_0) =\allone\right)\geq \left(\frac{1}{2n}\right)^{n}$. Since $A(t_1:t_0)\in\P_{n\times n}$ is binary, it follows that ${\Pr(A(t_1:t_0)=\allone (\blue\ee^{\ell(t_0)})^T)\geq \left(\frac{1}{2n}\right)^n}$. 

As $t_0$ is arbitrary, we can repeat the above analysis with different starting times and obtain an increasing sequence of times $\{t_k\}_{k=0}^\infty$ such that ${\Pr(A(t_{k+1}:t_{k})=\allone (\blue\ee^{\ell(t_k )})^T)\geq \left(\frac{1}{2n}\right)^n}$.  Since $\blue\x(t_{k+1})=A(t_{k+1}:t_k)\blue\x(t_k)$, this further implies that $\Pr(\blue\x(t_{k+1})=\blue x_{\ell}(t_k)\allone)\geq\left(\frac{1}{2n}\right)^n$. As a result, letting $C_k$ denote the event that an agreement exists in the network at time $t_k\geq t_0$, we have $\sum_{k=0}^\infty\Pr(C_k)=\infty$. Now,  $\{C_k\}_{k=0}^\infty$ are independent events because $\{A(t)\}_{t=0}^\infty$ are independent and
$\{[t_k,t_{k+1}-1]:k\in\N_0\}$ are disjoint intervals. Therefore, by the Second Borel-Cantelli Lemma  {\cite[Theorem 2.3.6]{durrett2019probability}}, 
infinitely many events among $\{C_k\}_{k=0}^\infty$ occur \textit{a.s.}, which proves the assertion.
\subsubsection*{Proof of Necessity}
Suppose there exist ${T=T(t_0,\blue\x(t_0))<\infty}$ and $\blue y=\blue y(t_0,x(t_0))\in\R$ such that $\blue\x(t)=\blue y\allone$ \textit{a.s.} for all $t\geq T$. Then $\blue\x(t)=A(t:t_0)\blue\x(t_0)$ implies that ${\lim_{t\to \infty} A(t:t_0)\blue\x(t_0) = \blue y\allone}$ \textit{a.s.} 
 
Besides, we know that $\|\blue\x(t)\|_{\infty}\leq \|\blue\x(t_0)\|_{\infty}$ for all ${t \geq t_0}$. Therefore, by  the Dominated Convergence Theorem~\cite[Theorem 1.6.7]{durrett2019probability}, 
we have
\begin{align}\label{eq:new}
    &\E\left[\blue y\right]\allone=\E\left[ \lim_{t\to \infty} A(t:t_0)\blue\x(t_0)\right]\cr
      &=\lim_{t\to \infty} \E[A(t:t_0)\blue\x(t_0)]=\lim_{t\to\infty} \expect A(t:t_0) \blue\x(t_0),
\end{align}
where the last equality follows from the independence of $\{A(t)\}_{t=0}^\infty$. As a result, 
    ${\lim_{t\to\infty}Q(t:t_0)  \blue\x(t_0) = \expect y \allone}$. 
For the initial condition $\blue\x(t_0)=\blue\ee^{(i)}$, this implies that the $i$th column of $Q(t:t_0)$ converges to  $\psi_i\allone$ for some scalar $\psi_i\in \R$ and hence, $\lim_{t\to\infty} Q(t:t_0)=\allone \blue\boldpsi^T$ for some vector $\blue\boldpsi=\blue\boldpsi(t_0)\in \P_n$. The latter step follows from the fact that the set of row-stochastic matrices is a closed semigroup (under matrix multiplication).  
\end{proof}
\begin{remark} \label{rem:matrix_limit}
  Theorem~\ref{thm:main} enables us to comment further on ergodic chains. To elaborate, we can repeat some of the arguments used in the proof above to show that if $\{Q(t)\}_{t=0}^\infty$ is ergodic, then for all ${(t_0,\blue \x(t_0))\in\N_0\times\R^n}$, there \textit{a.s.\ }exists a $\blue\boldpi(t_0)\in\P^n$ such that $\lim_{t\to\infty} A(t:t_0)=\allone \blue\boldpi^T(t_0)$ for all $t_0\in\N_0$. Moreover,  $\{A(t)\}_{t=0}^\infty\in\P_{n\times n}$ being binary implies that $\blue\boldpi(t_0)$ is binary, i.e.,  ${\blue\boldpi(t_0)\in\{\blue\ee^{(i)}:i\in[n]\}}$. Finally, taking expectations on both sides yields ${\lim_{t\to\infty}\expect A(t:t_0)=\allone\expect{\blue\boldpi}^T(t_0)}$, i.e., ${\lim_{t\to\infty}Q(t:t_0)=\allone\expect{\blue\boldpi}^T(t_0)}$ where ${\expect{\blue\boldpi}^T(t_0)\in\P_n}$. Interestingly, for the chain $\{Q (t)\}_{t=0}^\infty$, one can verify  that $\{\expect{\blue\boldpi}(t)\}_{t=0}^\infty$ forms what we call an \textit{absolute probability sequence}, a concept defined below and introduced by Kolmogorov in~\cite{kolmogoroff1936theorie}.
\end{remark}

\begin{definition}[Absolute Probability Sequence]
For a deterministic stochastic chain $\{Q(t)\}_{t=0}^{\infty}$, a sequence of stochastic vectors $\{\blue\boldpsi(t)\}_{t=0}^{\infty}$ is said to be an absolute probability sequence if $\blue\boldpsi^{T}(t+1)Q(t) = \blue\boldpsi^T(t)$  for all $t \geq 0$.
\end{definition}

We now connect this novel concept with the dynamics~\eqref{eq:main}.

\begin{theorem}
Suppose that $\{Q(t)\}_{t=0}^\infty= \{\expect A(t)\}_{t=0}^{\infty}$ is ergodic for the dynamics \eqref{eq:main}, with an absolute probability sequence $\{\blue\boldpsi(t)\}_{t=0}^{\infty} = \{\expect{ \blue\boldpi}(t)\}_{t=0}^\infty$, where $\{\blue\boldpi(t)\}_{t=0}^\infty$ is an absolute probability sequence for $\{A(t)\}_{t=0}^\infty$. Let $\blue y=\blue y(t_0,\blue\x(t_0))$ be the agreed value of \blue{all }the agents, i.e.,  $\lim_{t\to \infty} \blue\x(t)=\blue y\allone$ \textit{a.s} for initial conditions ${(t_0,\blue\x(t_0))\in\N_0\times \R^n}$ such that $\{\blue x_i(t_0)\}_{i=1}^n$ are all distinct. Then the probability distribution of $\blue y$ is given by 
    ${p_i(t_0):=\Pr( \blue y = \blue x_i(t_0)) = \psi_i(t_0)}$ for all ${i \in [n]}$.
\end{theorem}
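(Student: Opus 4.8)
The plan is to lean on the almost-sure limit structure of the binary products $A(t:t_0)$ already established in Remark~\ref{rem:matrix_limit}, which does most of the heavy lifting. By that remark, since $\{Q(t)\}=\{\expect A(t)\}$ is ergodic, there almost surely exists a binary stochastic vector $\boldpi(t_0)\in\{\ee^{(i)}:i\in[n]\}$ with $\lim_{t\to\infty}A(t:t_0)=\allone\boldpi^T(t_0)$. Because $\boldpi(t_0)$ is one-hot, I would write $\boldpi(t_0)=\ee^{(J)}$ for a random index $J=J(t_0)\in[n]$; the whole argument then reduces to describing the law of $J$.

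First I would identify the agreed value $y$ with a single coordinate of the initial state. Passing to the limit in $\x(t)=A(t:t_0)\x(t_0)$ and using the displayed matrix limit gives
\[
y\allone=\lim_{t\to\infty}\x(t)=\lim_{t\to\infty}A(t:t_0)\x(t_0)=\allone\boldpi^T(t_0)\x(t_0)=\allone\,x_J(t_0)\quad\text{a.s.},
\]
so that $y=x_J(t_0)$ almost surely: the common limiting value is exactly the initial state of the agent picked out by $J$.

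Next I would convert this into the claimed distribution using the hypothesis that $\{x_i(t_0)\}_{i=1}^n$ are all distinct. Distinctness makes the map $i\mapsto x_i(t_0)$ injective, so the events $\{y=x_i(t_0)\}$ coincide with $\{J=i\}=\{\boldpi(t_0)=\ee^{(i)}\}=\{\pi_i(t_0)=1\}$. Since $\pi_i(t_0)$ is $\{0,1\}$-valued, its expectation equals the probability of the event it indicates, and hence
\[
\Pr\bigl(y=x_i(t_0)\bigr)=\Pr\bigl(\pi_i(t_0)=1\bigr)=\E[\pi_i(t_0)]=\psi_i(t_0),
\]
where the final equality is the defining relation $\boldpsi(t_0)=\expect{\boldpi}(t_0)=\E[\boldpi(t_0)]$ assumed in the theorem.

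The proof is short precisely because Remark~\ref{rem:matrix_limit} supplies the key structural fact that the limiting selector $\boldpi(t_0)$ is binary. The one point I would flag as the main (if modest) obstacle is the role of the distinctness assumption: without it, several indices could share a common initial value, and $\Pr(y=x_i(t_0))$ would aggregate the corresponding $\psi$-masses rather than equal a single $\psi_i(t_0)$. Distinctness is exactly what guarantees the injectivity needed to read off the law of $y$ coordinatewise from the one-hot vector $\boldpi(t_0)$, and therefore from the absolute probability sequence $\boldpsi(t_0)$.
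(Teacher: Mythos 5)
Your proof is correct, and it shares its first half with the paper's proof: both invoke Remark~\ref{rem:matrix_limit} to write $\lim_{t\to\infty}\x(t)=\allone\boldpi^T(t_0)\x(t_0)$ a.s., so that $y=\boldpi^T(t_0)\x(t_0)$. Where you diverge is the final step. The paper never identifies events; instead it computes $\E[y]$ in two ways, obtaining $\sum_{i=1}^n \psi_i(t_0)x_i(t_0)=\sum_{i=1}^n p_i(t_0)x_i(t_0)$, and then argues that since this identity holds for all $\x(t_0)\in\R^n$, the coefficients must match. You instead exploit the one-hot structure of $\boldpi(t_0)$ directly: writing $\boldpi(t_0)=\ee^{(J)}$, distinctness of the $x_i(t_0)$ makes $\{y=x_i(t_0)\}=\{\pi_i(t_0)=1\}$, whence $\Pr(y=x_i(t_0))=\E[\pi_i(t_0)]=\psi_i(t_0)$. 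Your route is arguably the tighter one: the paper's coefficient-matching step implicitly requires that the numbers $p_i(t_0)$ do not vary as $\x(t_0)$ ranges over (distinct-entry) initial states --- which is true precisely because $p_i(t_0)=\Pr(J=i)$ depends only on the law of $\{A(t)\}$, but this is left unsaid --- and it is also where the distinctness hypothesis silently enters. Your argument makes both points explicit and needs no quantification over initial conditions, at the small cost of having to name the random index $J$ and check the event identification; the paper's version buys brevity by treating everything at the level of expectations. Your closing observation about what fails without distinctness (the probability aggregates the $\psi$-masses of all indices sharing a value) is exactly right and is not discussed in the paper.
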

\begin{proof}
By Remark~\ref{rem:matrix_limit}, we almost surely have ${\lim_{t\to\infty} \blue\x(t)=\lim_{t\to\infty} A(t:t_0) \blue\x(t_0)=\allone \blue\boldpi^T(t_0)\blue\x(t_0)}$. Thus, ${ \blue y=\blue\boldpi^T(t_0)\blue\x(t_0)}$, which implies that ${\expect {\blue y}=\blue{\expect \boldpi}^T(t_0)\blue \x(t_0)}=\sum_{i=1}^n  \psi_i(t_0)x_i(t_0)$.

On the other hand, the definition of expectation implies that $\blue{\expect y}=\sum_{i=1}^n p_i(t_0) x_i(t_0)$.

Hence, ${\sum_{i=1}^n \psi_i(t_0)\blue x_i(t_0) = \sum_{i=1}^n p_i(t_0) \blue x_i(t_0)}$. Since this holds for all $\blue\x(t_0)\in\R^n$, we must  have $p_i(t_0) = \psi_i(t_0)$ for all $i\in[n]$.
\end{proof}

\section{VARIANTS AND EXTENSIONS}

\subsection{Time-Reversed  Non-homogeneous Markov Chains}
Let $\{A(t)\}_{t=0}^\infty$ be a random sequence of independent binary matrices, and let ${\{\blue\ee^{(i)}:i\in [n]\}}$ be the state space of a {time-reversed} Markov chain whose probability transition matrix is $Q(t):=\expect A(t)$ at time $t$. To be precise, the Markov chain is a random process $\{\blue\z(t)\}$ that starts at an arbitrary time instant $t_\infty\in\N$ with an arbitrary probability distribution given by $\blue{\mathbf p}_\infty\in\P_n$ (where ${(\blue{\mathbf p}_\infty)_i:=\Pr(\blue\z(t_\infty)= \blue\ee^{(i)})}$ and evolves backwards in time with ${\Pr(\blue\z(t)=\blue\ee^{(j)}\mid \blue\z(t+1)=\blue\ee^{(i)}) =q_{ij}(t)}$ for all ${t<t_\infty}$. Equivalently, 
\begin{align}\label{eq:time_rev}
    \blue\z^T(t)=\blue\z^T(t+1)A(t)
\end{align}
for all $t\in\{t_\infty-1,t_\infty-2, \ldots, 0\}$. Note that~\eqref{eq:time_rev} is nothing but a time-reversed, transposed variant of~\eqref{eq:main}.

To relate these dynamics to time-homogeneous chains, recall that the limiting probability distribution of a regular Markov chain is a stationary distribution independent of the initial distribution~\cite{lalley}.
Analogously, we ask, is the limiting distribution of a time-reversed inhomogeneous Markov chain an absolute probability sequence of the associated stochastic chain that is independent of $\blue{\mathbf p}_\infty$? As we show, the answer is yes if and only if the stochastic chain is ergodic.

\begin{theorem} \label{thm:time-rev}
Consider the dynamics~\eqref{eq:time_rev} with a variable starting time $t_\infty$. Let $\{\blue \boldpsi(t)\}_{t=0}^\infty$ be an absolute probability sequence for $\{Q(t)\}_{t=0}^\infty$. Then the limiting distribution $\blue{\mathbf p}(t):=\sum_{i=1}^n p_i(t) \blue\ee^{(i)} \in\P_n$ with $p_i(t):=\lim_{t_\infty\to\infty}\Pr(\blue\z(t)=\blue\ee^{(i)})$ exists and  is invariant w.r.t.\ the initial distribution $\blue{\mathbf p}_\infty$ for all $t\in\N_0$  if and only if $\{Q(t)\}_{t=0}^\infty$ is ergodic, in which case $\blue{\mathbf p}(t)=\blue\boldpsi(t)$ for all $t\in\N_0$.
\end{theorem}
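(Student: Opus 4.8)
The plan is to reduce the whole statement to the deterministic evolution of the marginal law of $\z(t)$ and then to read off the ergodicity condition directly from that evolution. First I would fix a starting time $t_\infty$ and initial law $\mathbf p_\infty$ and let $\mathbf r^T(t):=(\Pr(\z(t)=\ee^{(1)}),\ldots,\Pr(\z(t)=\ee^{(n)}))$ be the distribution of $\z(t)$. Conditioning on $\z(t+1)$ and using the backward transition rule $\Pr(\z(t)=\ee^{(j)}\mid \z(t+1)=\ee^{(i)})=q_{ij}(t)$ gives the one-step recursion $\mathbf r^T(t)=\mathbf r^T(t+1)Q(t)$; note that only $Q(t)=\expect A(t)$ enters here, so the marginal law is deterministic even though the underlying dynamics~\eqref{eq:time_rev} is random. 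Iterating from $t_\infty$ down to $t$ yields the closed form $\mathbf r^T(t)=\mathbf p_\infty^T Q(t_\infty:t)$, which is the identity on which the rest of the argument rests.

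For sufficiency, I would assume $\{Q(t)\}_{t=0}^\infty$ is ergodic. Then for each fixed $t$, Definition~\ref{def:ergodicity} (with $t_0=t$ and the first index sent to infinity) gives $\lim_{t_\infty\to\infty}Q(t_\infty:t)=\allone\,\mathbf v^T(t)$ for some stochastic vector $\mathbf v(t)$, which I identify with $\boldpsi(t)$ in the final step. Substituting into the key identity and using $\mathbf p_\infty^T\allone=1$ gives $\lim_{t_\infty\to\infty}\mathbf r^T(t)=\mathbf p_\infty^T\allone\,\mathbf v^T(t)=\mathbf v^T(t)$, so $\mathbf p(t)$ exists, equals $\mathbf v(t)$, and is manifestly independent of $\mathbf p_\infty$.

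For necessity, I would assume $\mathbf p(t)$ exists and is independent of $\mathbf p_\infty$ for every $t$, and specialize to $\mathbf p_\infty=\ee^{(k)}$, which makes $\mathbf p_\infty^T Q(t_\infty:t)$ the $k$th row of $Q(t_\infty:t)$. Existence of the limit then forces every row of $Q(t_\infty:t)$ to converge as $t_\infty\to\infty$, and invariance with respect to $\mathbf p_\infty$ forces all rows to converge to the common limit $\mathbf p^T(t)$. Hence $\lim_{t_\infty\to\infty}Q(t_\infty:t)=\allone\,\mathbf p^T(t)$ for all $t$, which is exactly the rank-one-with-identical-rows convergence defining ergodicity.

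Finally, to pin down $\mathbf p(t)=\boldpsi(t)$ for the particular absolute probability sequence named in the statement, I would invoke (or quickly prove) that an ergodic chain has a \emph{unique} absolute probability sequence: writing $Q(t_\infty:t)=Q(t_\infty:t+1)Q(t)$ and passing to the limit shows the ergodic limit vector satisfies $\mathbf v^T(t)=\mathbf v^T(t+1)Q(t)$, so it is an absolute probability sequence; conversely, iterating the relation $\boldpsi^T(t)=\boldpsi^T(s)Q(s:t)$ for any absolute probability sequence and letting $s\to\infty$ (using that $\boldpsi(s)$ is stochastic and $Q(s:t)\to\allone\,\mathbf v^T(t)$) forces $\boldpsi(t)=\mathbf v(t)=\mathbf p(t)$. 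I expect this uniqueness step to be the main subtlety, together with the bookkeeping that shows ``existence plus invariance of the limit'' is logically identical to Definition~\ref{def:ergodicity}; the remainder is a purely linear-algebraic propagation of the marginal law and, unlike the proof of Theorem~\ref{thm:main}, requires no appeal to dominated convergence.
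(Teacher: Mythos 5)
Your proof is correct, and its first half coincides with the paper's: both derive the key identity $\mathbf{p}^T(t)=\mathbf{p}_\infty^T\lim_{t_\infty\to\infty}Q(t_\infty:t)$ from the backward transition rule and settle sufficiency by substituting the ergodic limit. The genuine differences are in the remaining two steps. For necessity, the paper argues by contraposition: assuming non-ergodicity, it either notes that the limit fails to exist, or it takes a column $\mathbf{v}$ of $\lim_{t_\infty\to\infty}Q(t_\infty:t_0)$ that is not a multiple of $\allone$, decomposes $\mathbf{v}=\alpha\allone+\beta\mathbf{w}$ with $\allone^T\mathbf{w}=0$, and exhibits two explicit initial distributions, $\frac{1}{n}\allone$ and $\frac{1}{n}\allone+\tilde\beta\mathbf{w}$, whose limiting values of $p_\ell(t_0)$ differ by $\tilde\beta\mathbf{w}^T\mathbf{v}\neq 0$. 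You argue directly instead: taking $\mathbf{p}_\infty=\ee^{(k)}$, existence of the limit for every $k$ amounts to entrywise convergence of $Q(t_\infty:t)$, and invariance in $k$ forces all rows to a common stochastic limit $\mathbf{p}^T(t)$, which is verbatim Definition~\ref{def:ergodicity}; this is shorter, requires no perturbation construction, and absorbs the ``limit does not exist'' case that the paper must treat separately, though the paper's construction has the merit of displaying concretely how invariance breaks when ergodicity fails. For the identification $\mathbf{p}(t)=\boldpsi(t)$, the paper invokes the uniqueness of absolute probability sequences of ergodic chains by citing Blackwell's theorem (together with Remark~\ref{rem:matrix_limit}), whereas you prove uniqueness inline: passing to the limit in $Q(t_\infty:t)=Q(t_\infty:t+1)Q(t)$ shows the ergodic limit vector is itself an absolute probability sequence, and letting $s\to\infty$ in $\boldpsi^T(t)=\boldpsi^T(s)Q(s:t)$, with the stochasticity of $\boldpsi(s)$ killing the error term $\boldpsi^T(s)\left(Q(s:t)-\allone\mathbf{v}^T(t)\right)$, forces any absolute probability sequence to agree with it. That makes your argument self-contained where the paper's rests on a citation; both routes are sound.
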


\begin{proof}
    We first note that for all $t\leq t_\infty$, we have $\Pr(\blue\z(t)=\blue\ee^{(i)})=\E[\blue z_i(t)]=\blue{\expect z}_i(t)$ for all ${i\in [n]}$, which means that $ p_\infty = \blue{\expect \z}(t_\infty)$ and more generally that the probability distribution of $\blue \z(t)$ is determined by $\blue{\expect \z}^T(t) = \blue{\expect \z}^T(t_\infty)\expect A(t_\infty:t) = \blue{\mathbf p}_\infty^T Q(t_\infty:t)$ for all $t\leq t_\infty$. Thus, $\blue{\mathbf p}^T(t)=\blue{\mathbf p}_\infty^T\lim_{t_\infty\to\infty}Q(t_\infty:t)$ (if the limit exists). Using this, we first establish the sufficiency  of ergodicity and then its necessity for the invariance assertion to hold.
    
    If $\{Q(t) \}_{t=0}^\infty$ is ergodic, then $\blue{\mathbf p}^T(t)$ is given by $\blue{\mathbf p}_\infty^T\lim_{t_\infty\to\infty}Q(t_\infty:t)\stackrel{(a)}=\blue{\mathbf p}_\infty^T\allone \blue\boldpsi^T(t)\stackrel{(b)}=\blue\boldpsi^T(t)$, where  $(a)$ follows from Remark~\ref{rem:matrix_limit} and $(b)$ holds because $\blue{\mathbf p}_\infty\in\P_n$. Since $\{ \blue\boldpsi(t)\}_{t=0}^\infty$ are unique (see~\cite{blackwell1945finite}, Theorem 1), it follows that $\blue{\mathbf p}(t)=   \blue\boldpsi(t)$ \textit{a.s.\ }does not vary with $\blue{\mathbf p}_\infty$. 
    
    On the other hand, if $\{Q(t) \}_{t=0}^\infty$ is not ergodic, then there exists a $t_0\in \N_0$ such that either ${\lim_{t_\infty\to\infty} Q(t_\infty:t_0)}$ does not exist (in which case there is nothing to prove), or there exists an index $\ell\in [n]$ such that the column vector ${\blue{\v}:=\lim_{t_\infty\to\infty} Q_{[n]\,\{\ell\}}(t_\infty:t_0)}$ satisfies $\blue{\v}\neq \alpha \allone$ for all ${\alpha\in \R}$. Therefore, we can write $\blue{\v}=\alpha \allone+\beta \blue{\mathbf w}$ for some $\alpha,\beta>0$ and \blue{some} $\blue{\mathbf w}$ with $\blue{\mathbf w}^T \allone=0$. Note that ${\blue{\mathbf w}^T\blue{\v}\not=0}$.
    Note \blue{also} that for small enough  $\tilde{\beta}>0$, $\tilde{\blue{\mathbf w} }={\frac{1}{n}\allone+\tilde{\beta}\blue{\mathbf w}\in\P_n}$. Now, for  $\blue{\mathbf p}_\infty=\frac{1}{n}\allone$ and $\blue{\mathbf p}_\infty=\tilde{\blue{\mathbf w}}$, \blue{the  value of }  ${ p}_\ell(t_0)=\left(\blue{\mathbf p}_\infty^T\lim_{t_\infty\to\infty} A(t_\infty:t_0)\right)_\ell$ would be $\frac{1}{n}\allone^T\blue{\v}$ and $\frac{1}{n}\allone^T\blue{\v}+\tilde{\beta}\blue{\mathbf w}^T\blue{\v}$, respectively, which along with $\blue{\mathbf w}^T\blue{\v}\neq 0$ violate\blue{s} the invariance condition.  
    \end{proof}
\blue{Theorem~\ref{thm:time-rev} also shows that, just as stationary distributions are the limiting probability distributions of Markov chains defined by regular matrices, absolute probability sequences can be interpreted as the limiting distributions of time-reversed Markov chains defined by ergodic stochastic chains.}
\subsection{Random Adaptation Approach to Friedkin-Johnsen Model}\label{subsec:rv_fj}
The  dynamics~\eqref{eqn:averaging} can be viewed as a time-varying version of the French-Degroot opinion dynamics model where agent opinions move towards convex combinations of other agents' opinions. The Friedkin-Johnsen model, in addition to being partly influenced by neighbors, introduces a \textit{prejudice} that affects the agents' opinions. Mathematically, 
\begin{align}\label{eq:fj}
    \blue\x(t+1) = \Lambda W(t)\blue\x(t) + (I-\Lambda) \blue\u,
\end{align}
where $\blue\x(t)\in\R^n$ denotes the vector of opinions, $\blue\u\in\R^n$ is the  vector of the agents' prejudices,  $W(t)\in\R^{n\times n}$ denotes the \textit{influence matrix}, which describes how the agents influence each other, and $\Lambda\in\R^{n\times n}$ is a diagonal matrix whose $i$th diagonal entry, $\lambda_i\in[0,1]$, denotes the \textit{susceptibility} of agent $i$ to social influence and $1-\lambda_i$ denotes the susceptibility of agent $i$ to her prejudice $\blue u_i$.

Similar to~\eqref{eq:fj}, we can provide a random adaptation variation of the Friedkin-Johnsen model as follows. In the $t$-th time  period, agent $i$ decides between adapting to \blue{her} neighbor's opinion versus adapting to \blue{her} prejudice. Her choice is independent of other agents' choices and her own past choices. With a probability $\gamma(t) \in(0,1)$, she follows the adaptation scheme described earlier, and with probability $1- \gamma(t)$, she resets her opinion to her prejudice $u_i\in\R$. As before, we assume the initial state vector $\blue\x(0)\in\R^n$ to be arbitrary.  This results in the update rule
\begin{align}\label{eq:random_fj}
    \blue\x(t+1) = \Lambda(t) A(t) \blue\x(t) + (I-\Lambda(t))\blue\u, 
\end{align}
where $\blue\x(t)$ and $A(t)$ have their usual meanings, $\blue\u\in\R^n$ is a vector of external influences/prejudices, and $\{\Lambda(t)\}_{t=0}^\infty$ is a sequence of diagonal matrices whose diagonal entries $\{\lambda_i(t):i\in[n]\}$ are Bernoulli random variables with $\Pr(\lambda_i(t)=1)=\gamma_i(t)$.

\begin{remark}~\label{rem:proof_aid}Observe that~\eqref{eq:random_fj} is a special case of~\eqref{eq:main} by letting $\blue\y^T(t):=[\blue\x^T(t)\,\, \blue\u^T]$ and \blue{by} noting \blue{that}
\begin{align}\label{eq:higher_dim}
    \blue\y(t+1) = B(t) \blue\y(t),
\end{align}
where $B(t):= 
    \begin{pmatrix}
        \Lambda(t) A(t)\, & I -  \Lambda (t)\\
        O_{n\times n} \, & I_{n\times n}
    \end{pmatrix}$.
As a result, for any $t,t_0\in\N_0$ with $t\geq t_0$, we have $\blue\y(t)=B(t:t_0)\blue\y(t_0)$ with $B_{[n]}(t:t_0)= P(t:t_0)$, where $P(t):=\Lambda(t)A(t)$. Note also that $P(t)\leq A(t)$ because $\Lambda(t)\leq I$.
\end{remark}

We now define two terms: \textit{dominance in expectation} and \textit{simultaneously malleable} agents, and we  show that, if  $\{Q(t)\}_{t=0}^\infty$ is an ergodic chain with simultaneously malleable agents that dominate in expectation, then all the agents' opinions will almost surely enter  the \blue{prejudice set} (the set of external influences) $\U:=\{u_i:i\in[n]\}$ in finite time.

\begin{definition} [\textbf{Dominance in Expectation}] The agents of a set $S\subseteq[n]$ are said to \textit{dominate in expectation} if $\sum_{t=0}^\infty \allone^T Q_{\bar S S}(t)\allone=\infty$ and $\sum_{t=0}^\infty \allone^T  Q_{S\bar S}(t)\allone<\infty$.
\end{definition}

In the the average-case scenario, if a set of agents $S\subseteq [n]$ dominate in expectation, then the agents of $S$ significantly influence the rest of the agents $\bar S$ without themselves being significantly influenced by $\bar S$ in the long run.

\begin{definition} [\textbf{Simultaneously Malleable Agents}]\label{def:sma} The agents of a set $S\subseteq[n]$ are said to be \textit{simultaneously malleable} if $\sum_{t=0}^\infty \prod_{i\in S}(1-\gamma_i(t))=\infty$.
\end{definition}

Essentially, simultaneously malleable agents are those whose probability of simultaneously adapting to their respective external influences does not vanish too fast with time.
\begin{theorem} \label{thm:malleable} For the dynamics~\eqref{eq:random_fj}, suppose ${\{Q(t)\}_{t=0}^\infty=\{\expect A(t)\}_{t=0}^\infty}$ is ergodic, and suppose there exists a set $S\subseteq[n]$ of simultaneously malleable agents that dominate in expectation. Then there \textit{a.s.\ }exists a time $T<\infty$ such that $\{\blue x_i(T)\}_{i=1}^n\subseteq\U$.
\end{theorem}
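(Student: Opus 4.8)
The plan is to show that the absorbing event $\mathcal{A}_t:=\{x_i(t)\in\U \text{ for all } i\in[n]\}$ occurs at some finite time almost surely (it is absorbing because, once all states lie in $\U$, no agent can adapt to a state outside $\U$). The structural fact I would lean on throughout is that, since $A(t)$ is binary stochastic, each agent's realized state is obtained by tracing a backward \emph{lineage} along the adaptation pointers until the first \emph{reset} (an index with $\lambda=0$, which sets that agent to its prejudice $u_i\in\U$): if a reset is met along the lineage the value lands in $\U$, and otherwise it equals the value carried by the lineage's endpoint. I would prove the claim in two phases.

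\emph{Phase 1 (the dominant set $S$ gets absorbed in $\U$).} First I would apply the Second Borel--Cantelli Lemma to the independent events $\{\lambda_i(t)=0\ \forall i\in S\}$, whose probabilities $\prod_{i\in S}(1-\gamma_i(t))$ sum to $\infty$ by simultaneous malleability; hence all agents of $S$ reset simultaneously infinitely often a.s. Second, since $\Pr(\exists\, i\in S,\, j\in\bar S:\lambda_i(t)=1,\,a_{ij}(t)=1)\le \allone^T Q_{S\bar S}(t)\allone$, which is summable by dominance, the First Borel--Cantelli Lemma yields a finite (random) time $t^\ast$ after which no $S$-agent ever adapts to a $\bar S$-agent. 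Taking the first simultaneous $S$-reset after $t^\ast$, all $S$-states enter $\U$; and because past $t^\ast$ every $S$-agent either resets or adapts within $S$, an easy induction shows there is a finite $\tau_1$ with $x_i(t)\in\U$ for all $i\in S$ and all $t\ge\tau_1$.

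\emph{Phase 2 (forcing $\bar S$ into $\U$).} Using ergodicity I would build deterministic disjoint windows $[\sigma_k,\sigma_{k+1})$: for each start $\sigma_k$, the convergence $Q(t:\sigma_k)\to\allone\boldpsi^T(\sigma_k)$ provides an index $\ell_k$ with $\psi_{\ell_k}(\sigma_k)\ge 1/n$ and a time $\sigma_{k+1}$ with $Q_{i\ell_k}(\sigma_{k+1}:\sigma_k)\ge 1/(2n)$ for all $i$, whence the positive-correlation Lemma~\ref{lem:positive_correlation} gives $\Pr(G_k)\ge (1/2n)^n$ for the event $G_k:=\{A(\sigma_{k+1}:\sigma_k)=\allone(\ee^{(\ell_k)})^T\}$. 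The crucial point is that $\ell_k\in S$ for all large $\sigma_k$: for $i\in S$, $\sum_{j\in\bar S}Q_{ij}(t:\sigma_k)$ is exactly the probability that $i$'s base-chain lineage lands in $\bar S$, which forces at least one $S\to\bar S$ crossing on that lineage and is therefore bounded by $\mathbb{E}[\#\,S\to\bar S \text{ adaptations in }[\sigma_k,t)]\le \sum_{s\ge\sigma_k}\allone^T Q_{S\bar S}(s)\allone\to 0$; so for $\sigma_k$ large no $\ell\in\bar S$ can satisfy $Q_{i\ell}\ge 1/(2n)$ for every $i\in S$. On $G_k$ with $\sigma_k\ge\tau_1$, every agent's lineage from $\sigma_{k+1}$ reaches $\ell_k\in S$ at time $\sigma_k$, where $x_{\ell_k}(\sigma_k)\in\U$; combined with the lineage/reset dichotomy, this places \emph{every} agent's state in $\U$ at time $\sigma_{k+1}$, irrespective of any intermediate resets.

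To conclude, the $G_k$ are independent (they involve disjoint time-windows of the $\{A(t)\}$) with $\Pr(G_k)\ge (1/2n)^n$, so by the Second Borel--Cantelli Lemma infinitely many occur a.s.; since $\tau_1<\infty$ a.s., some $G_k$ with $\sigma_k\ge\tau_1$ and $\sigma_k$ large enough that $\ell_k\in S$ occurs, yielding a finite $T=\sigma_{k+1}$ with $\{x_i(T)\}_{i=1}^n\subseteq\U$. I expect the main obstacle to be Phase 2, and specifically the proof that $\ell_k\in S$: the forcing argument collapses all lineages onto a single agent, but this helps only if that common target is already absorbed in $\U$, so one must rule out the agreement agent lying in $\bar S$. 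The crossing-number estimate driven by $\sum_t \allone^T Q_{S\bar S}(t)\allone<\infty$ is exactly what closes this gap (the divergence half of dominance appears to enter only indirectly, through the standing ergodicity hypothesis).
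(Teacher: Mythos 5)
Your proof is correct. Phase 1 is exactly the paper's argument: the second Borel--Cantelli lemma applied to the simultaneous-reset events, the first Borel--Cantelli lemma applied to the $S\to\bar S$ adaptation events (using $\sum_{t}\allone^T Q_{S\bar S}(t)\allone<\infty$), and the observation that past both resulting times every $S$-agent either resets or adapts within $S$, so the $S$-states are absorbed in $\U$. Phase 2, however, is genuinely different. The paper restarts the base product at the random time $T_K$ and invokes Remark~\ref{rem:matrix_limit}: $A(t:T_K)\to\allone\boldpi^T(T_K)$ a.s.; since $A_{S\bar S}(t:T_K)=O$ for $t\geq T_K$, the random agreement index cannot lie in $\bar S$, and the entrywise bound $P(t)=\Lambda(t)A(t)\leq A(t)$ then forces the columns of $B(t:T_K)$ indexed by $\bar S$ to vanish, so eventually every state is a binary convex combination of $\{x_i(T_K):i\in S\}\cup\U=\U$. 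You instead re-run the sufficiency machinery of Theorem~\ref{thm:main} over deterministic disjoint windows, where Lemma~\ref{lem:positive_correlation} yields forced agreement of the $A$-product onto a single column $\ell_k$ with probability at least $(1/(2n))^n$, and you add an ingredient the paper does not have: the crossing estimate $Q_{i\ell}(t:\sigma_k)\leq\sum_{s\geq\sigma_k}\allone^T Q_{S\bar S}(s)\allone$ for $i\in S$, $\ell\in\bar S$ (a union/first-moment bound over $S\to\bar S$ crossings of the backward lineage), which localizes $\ell_k$ in $S$ for all sufficiently late windows; a second Borel--Cantelli application then finishes. The trade-offs are real: the paper's route is shorter because Remark~\ref{rem:matrix_limit} is already in place, but it applies an a.s.\ limit theorem at a random restart time $T_K$ (legitimate, since $T_K$ takes countably many values, though the paper leaves this implicit); your route keeps all ergodicity input at deterministic times, is quantitative (an explicit per-window success probability), and establishes at the level of the expected chain $Q$ what the paper establishes at the realization level of $A$. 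Your closing remark is also accurate and matches the paper: neither proof uses the divergence half of dominance, $\sum_t\allone^T Q_{\bar S S}(t)\allone=\infty$, directly; only the summability half and ergodicity enter.
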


\begin{proof}
First, note that the decisions taken in the network at time $t$ are independent across agents. As a result,
$
    \Pr\left(\bigcap_{i\in S}\left\{ \lambda_i(t)=0\right\}\right) = \prod_{i\in S} (1-\gamma_i(t)).
$
In light of Definition~\ref{def:sma} and the Second Borel-Cantelli Lemma, this further implies that there exists an increasing sequence of random times $\{T_k\}_{k=1}^\infty$ such that $\lambda_i(T_{k}-1)=0$ \textit{a.s.\ }for all $i\in S$ and all $k\in\N$. This means that $ \blue x_i(T_k) = u_i$ \textit{a.s.\ }for all $i\in S$ and all $k\in\N$.

On the other hand, we can use the union bound to show that
${\Pr( \bigcup_{i\in S} \bigcup_{i\in \bar S} \{ a_{ij}(t)=1\} )}$ is at most \\$ \sum_{i\in S} \sum_{j\in \bar S} q_{ij}(t) =   \allone^T Q_{S\bar S}(t)\allone$. Since $\sum_{t=0}^\infty \allone^T Q_{S\bar S}(t)\allone < \infty$, it follows from the First Borel-Cantelli Lemma~\cite[Theorem 2.3.1]{durrett2019probability} 
that there exists a random time $T^*<\infty$ such that $a_{ij}(t)=0$ \textit{a.s.\ }for all $i\in S$, $j\in \bar S$, and $t\geq T^*$. This means that there \textit{a.s.} exists a point of time $T^*$ after which the agents in $S$ are never influenced by those in $\bar S$.

Let $K:=\inf\{k\in\N_0: T_k> T^*\}$. Then, $T_K>T^*$ and $\blue x_i(T_K)=u_i$ \textit{a.s.\ }for all $i\in S$. Hence, ${\{\blue x_i(t):i\in S\}\subseteq\U\bigcup\{\blue x_i(T_K):i\in S\} \subseteq \U}$ for all ${t\geq T_K}$.

It remains to show the existence of a time $T\geq T_K$ such that $\{\blue x_i(t):i\in\bar S\}\subseteq\U$ for all $t\geq T$. By the definition of ergodicity, the truncated chain $\{Q(t)\}_{t=\tau}^\infty$ is ergodic for all $\tau\in\N_0$. It follows from Remark~\ref{rem:matrix_limit} that there exists a random vector ${\blue\boldpi(\tau)\in\{\blue\ee^{(i)}:i\in[n]\}}$ such that $\lim_{t\to\infty}A(t:\tau) = \allone \blue\boldpi^T(\tau)$ \textit{a.s.} Thus, ${\lim_{t\to\infty}A(t:T_K) = \allone \blue\boldpi^T(T_K)}$ \textit{a.s.} On the other hand, for $t\geq T_K\geq T^*$, we have $a_{ij}(t)=0$ \textit{a.s.\ }
for all $i\in S$ and ${j\in \bar S}$. Hence, $A_{S\bar S}(t:T_K)=O$ \textit{a.s.\ }for all $t\geq T_K$. It follows that $\lim_{t\to\infty} A
_{S\bar S}(t:T_K)=O$ \textit{a.s.}, which means that $\blue\boldpi^T(T_K)\notin\{e^{(i)}:i\in\bar S\}$ \textit{a.s}. Since $\allone\blue\boldpi^T(T_K)$ has identical rows, this further implies that the columns of ${\lim_{t\to\infty} A(t:T_K)}$ indexed by $\bar S$ are all zero \textit{a.s.}  It now follows from Remark~\ref{rem:proof_aid} that 
\begin{align*}
    \limsup_{t\to\infty}B_{[n]\bar S}(t:T_K) &= \limsup_{t\to\infty}P_{[n]\bar S}(t:T_K)\cr 
    &\leq \limsup_{t\to\infty} A_{[n]\bar S}(t:T_K)=O\quad a.s.
\end{align*}
Equivalently, for all sufficiently large $t$, the entries of $\blue\y(t) = B(t:T_K)\blue\y(T_K)$ are binary convex combinations of ${\{\blue x_i(T_K):i\in  S\}\bigcup\U=\U}$. This completes the proof.
\end{proof}

We now consider a special case of~\eqref{eq:random_fj} in which the probability distributions of the agents' opinions converge to limits that can be computed using closed-form expressions.

\begin{theorem}\label{thm:fj_model}
Suppose the matrix pairs $\{(\Lambda(t), A(t))\}_{t=0}^\infty$ are independent and identically distributed. Also, suppose $\Gamma<I$, where $\Gamma:=\expect\Lambda(t)$ and $Q:=\expect A(t)$ for all $t\in\N_0$. Finally, suppose that $|\U|=n$ and that $\{\blue x_i(0)\}\bigcap\U=\emptyset$. Then the following assertions hold.
\begin{enumerate} [leftmargin=6mm,label={(\roman*)}]
    \item \label{item:one} We have ${\lim_{t\to\infty}\Pr(\blue x_i(t)=u_j) = v_{ij}}$ for all ${i,j\in [n]}$, where $\{v_{ij}:i,j\in[n]\}$ are the entries of
    \begin{align}
        V:=\left(I-\Gamma Q\right)^{-1}\left(I-\Gamma\right).
    \end{align}
    \item \label{item:two} There \textit{a.s.\ }exists a random time $T<\infty$ such that ${\blue x_i(t)\in\U}$ for all $t\geq T$.
\end{enumerate}
\end{theorem}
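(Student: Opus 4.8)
The plan is to prove the two claims separately: claim~\ref{item:one} via an exact time-homogeneous linear recursion for the prejudice-occupation probabilities, and claim~\ref{item:two} via an absorption argument combined with Borel--Cantelli. Throughout I use that $\Lambda(t)$, $A(t)$ and $\x(t)$ are mutually independent (property (iii) of \eqref{eq:main} together with the independence of each agent's reset decision from its adaptation target), and that the i.i.d.\ hypothesis makes the relevant coefficients constant in $t$. For claim~\ref{item:one} I define $V(t)\in\R^{n\times n}$ by $V_{ij}(t):=\Pr(x_i(t)=u_j)$ and derive its update by conditioning on $\lambda_i(t)$: on a reset (probability $1-\gamma_i$) agent $i$ sets $x_i(t+1)=u_i$, which equals $u_j$ exactly when $i=j$ because $|\U|=n$ makes the prejudices distinct; on an adaptation (probability $\gamma_i$) it copies agent $k$ with probability $q_{ik}$ and lands on $u_j$ precisely when $x_k(t)=u_j$. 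Factoring $\E[\lambda_i(t)a_{ik}(t)\mathbf 1[x_k(t)=u_j]]=\gamma_i q_{ik}V_{kj}(t)$ via the independence above, these contributions assemble into
\[
V(t+1)=\Gamma Q\,V(t)+(I-\Gamma),
\]
whose unique fixed point satisfies $(I-\Gamma Q)V^\star=I-\Gamma$. The hypothesis $\{x_i(0)\}\cap\U=\emptyset$ makes this exact from the start, giving $V(0)=O$ and ensuring each event ``$x_k(t)=u_j$'' records adoption of the prejudice $u_j$ rather than a coincidental initial value.

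It remains to show $V(t)\to V$. I would observe that $\Gamma Q\geq O$ has row sums $(\Gamma Q\allone)_i=\gamma_i(Q\allone)_i=\gamma_i$, so $\Gamma<I$ and the stochasticity of $Q$ give $\|\Gamma Q\|_\infty=\max_i\gamma_i<1$ and hence $\rho(\Gamma Q)<1$. This makes $I-\Gamma Q$ invertible (Neumann series), identifying $V^\star=(I-\Gamma Q)^{-1}(I-\Gamma)=V$, and forces $(\Gamma Q)^t\to O$. Subtracting the fixed point from the recursion yields $V(t)-V=(\Gamma Q)^t(V(0)-V)\to O$, i.e.\ $\lim_{t\to\infty}\Pr(x_i(t)=u_j)=v_{ij}$, which proves claim~\ref{item:one}.

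For claim~\ref{item:two}, I would first note that the prejudice set $\U$ is absorbing for \eqref{eq:random_fj}: if $x_k(t)\in\U$ for every $k$, then each $x_i(t+1)$ is either $u_i$ (on a reset) or some copied $x_k(t)\in\U$ (on an adaptation, since $A(t)$ is binary and stochastic), so all states remain in $\U$. It therefore suffices to reach $\U$ once. The simultaneous-reset event $\{\Lambda(t)=O\}$ has the constant positive probability $\prod_{i=1}^n(1-\gamma_i)>0$ (using $\Gamma<I$ and the i.i.d.\ hypothesis), and these events are independent across $t$; by the Second Borel--Cantelli Lemma one of them occurs at an a.s.\ finite time, placing every $x_i=u_i$ in $\U$. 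Combined with absorption, this yields an a.s.\ finite $T$ with $x_i(t)\in\U$ for all $t\geq T$.

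The main obstacle is the bookkeeping behind the recursion in claim~\ref{item:one}: one must justify the factorization $\E[\lambda_i(t)a_{ik}(t)\mathbf 1[x_k(t)=u_j]]=\gamma_i q_{ik}V_{kj}(t)$ from the mutual independence of $\Lambda(t)$, $A(t)$ and $\x(t)$, and use the distinctness of the $u_j$ together with their separation from the initial opinions to keep every equality event unambiguous. Once the recursion is pinned down, the spectral-radius estimate and the Borel--Cantelli step are routine.
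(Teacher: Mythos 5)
Your proof is correct, and it takes a genuinely different route from the paper's on both parts. For part~(i), the paper works with the lifted $2n$-dimensional system of Remark~\ref{rem:proof_aid}: because the prejudices are distinct and disjoint from the initial opinions, the event $\{x_i(t)=u_j\}$ coincides with the binary entry $B_{i\,n+j}(t:0)$ being $1$, so $\Pr(x_i(t)=u_j)=\big((\expect B(0))^t\big)_{i\,n+j}$, and the limit of $(\expect B(0))^t$ is then read off from the known convergence of the deterministic Friedkin--Johnsen dynamics (Theorem 21 and Corollary 22 of~\cite{proskurnikov2017tutorial}). You instead derive the exact one-step recursion $V(t+1)=\Gamma Q\,V(t)+(I-\Gamma)$ for the occupation probabilities $V_{ij}(t)=\Pr(x_i(t)=u_j)$ and contract to its fixed point via $\rho(\Gamma Q)\leq\|\Gamma Q\|_\infty=\max_i\gamma_i<1$; this is self-contained, avoids the external citation, and in fact yields the limit for an arbitrary initial condition, since the disjointness hypothesis only enters as $V(0)=O$, whereas in the paper's argument it is essential for identifying probabilities with expected matrix entries. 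Both routes rest on the same implicit modeling assumption that $\Lambda(t)$ is independent of $A(t)$, so that $\E[\Lambda(t)A(t)]=\Gamma Q$; you state this explicitly, while the paper uses it silently when computing $\expect B(0)$. For part~(ii), the paper cites Theorem~\ref{thm:malleable} with $S=[n]$, while your absorption-plus-Borel--Cantelli argument inlines exactly the half of that theorem's proof that is relevant when $\bar S=\emptyset$. Your version is arguably tighter: Theorem~\ref{thm:malleable} as stated assumes ergodicity of $\{Q(t)\}$, a hypothesis not made in the present theorem (and not automatic for a constant chain $Q$), so the paper's appeal to it is legitimate only because ergodicity is used there solely to handle the agents in $\bar S$, which is empty here; your direct argument never has to confront this point.
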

\begin{proof}
We first recall from Remark~\ref{rem:proof_aid} that ${\blue\y(t) = B(t:0) \blue\y(0)}$ for all $t\in\N_0$. Since $\U$ has $n$ distinct elements and since $\{\blue x_i(0)\}\bigcap \U=\emptyset$, this implies that
\begin{align}\label{eq:just_expect}
    \Pr(\blue x_i(t)=u_j) &= \Pr\left(B_{i\, n+j}(t:0)=1\right)
    = \E\left[B_{i\, n+j}(t:0)\right]\nonumber\\
    &\stackrel{(a)}= \expect{B}_{i\,n+j}(t:0)\stackrel{(b)}=\left(\left(\expect B(0)\right)^t\right)_{i\,n+j},
\end{align}
where $(a)$ and $(b)$ hold because $\{(\Lambda(t),A(t))\}_{t=0}^\infty$ are i.i.d. 

Thus, it suffices to evaluate $\lim_{t\to\infty} (\expect B(0))^t$. Observe that for the expected dynamics $\blue{\expect \x}(t+1) = \Gamma Q \expect {\blue\x}(t) + (I- \Gamma)\blue\u$, we have 
${\blue{\expect \y}(t) = (\expect B(0))^t  \blue\y(0)}$ as a consequence of Remark~\ref{rem:proof_aid}. On the other hand, we know from Theorem 21 and Corollary 22 in~\cite{proskurnikov2017tutorial} that $\lim_{t\to\infty} \blue{\expect \x}(t) = V \blue\u$, which implies that 
$$
    \lim_{t\to\infty} \blue{\expect \y}(t) =
    \begin{pmatrix}
        O_{n\times n} & V\\
        O_{n\times n} & I
    \end{pmatrix}
    \begin{pmatrix}
        \blue\x(0)\\
        \blue\u
    \end{pmatrix} =
    \begin{pmatrix}
        O_{n\times n} & V\\
        O_{n\times n} & I
    \end{pmatrix} \blue\y(0).
$$
That is, 
$ \lim_{t\to\infty} (\expect B(0))^t \blue\y(0)=
    \begin{pmatrix}
        O_{n\times n} & V\\
        O_{n\times n} & I
    \end{pmatrix} \blue\y(0).
$
As $\blue\y(0)$ (which stacks the initial states and the external influences) is arbitrary, it follows that ${\left(\lim_{t\to\infty}(\expect B(0))^t\right)_{i\,n+j} = v_{ij}}$ for all $i,j\in [n]$. In light of~\eqref{eq:just_expect}, this proves~\ref{item:one}.

To prove~\ref{item:two}, note that $\prod_{i\in [n]}(1- \gamma_i(t))$ is positive and time-invariant because $\Gamma(t)=\Gamma<I$ for all $t\in\N_0$. Hence, all the agents in the network are simultaneously malleable. Since they also dominate in expectation trivially,~\ref{item:two} follows immediately from Theorem~\ref{thm:malleable}.
\end{proof}

\subsection{Rank-One Perturbation of the Friedkin-Johnsen Variant}\label{sec:rank_one}

Another random adaptation-based variant of the Friedkin-Johnsen model can be obtained by letting the opinion of each agent `mutate' to any external influence with a fixed probability distribution, i.e., in the $t$th time period, agent $i$ either adapts to a neighbor's opinion or adapts to one of the prejudices independently of her past choices. With  probability $ \gamma_i(t)\in (0,1)$, the agent follows the adaptation scheme described earlier (in~\eqref{eq:main}), and with probability ${ 1 - \gamma_i(t)} $, however, instead of adapting to one fixed prejudice, she chooses an opinion from the set ${\U=\{u_i:i\in [n]\}}$, according to a stochastic vector $\blue{\mathbf q}$ on  $\U$. That is, 
\begin{align}\label{eq:rankone_dynamics}
    \blue\x(t+1) = \Lambda(t)A(t)\blue\x(t) + (I - \Lambda(t))C(t)\blue\u,
\end{align}
where $\blue\x(t)$ and $A(t)$ are as before, $\blue\u \in \R^n$ is the vector of external influences, $\{\Lambda(t)\}_{t=0}^\infty$ is a sequence of random binary diagonal matrices with $\Pr(\lambda_i(t) = 1) =  \gamma_i(t)$, and $\{C(t)\}_{t=0}^\infty$ is a sequence of i.i.d.\ binary stochastic random matrices. For any $i \in [n]$, $\Pr(C_{ij}(t) =1) =  q_j$ for all $j \in [n]$, independent of the other rows. Here $\expect C(t) = \allone \blue{\mathbf q}^T $, for all $t\in \N_0$, which is a rank-one matrix for all $t \in \N_0$.

\begin{theorem}\label{thm:rankone_perturb}
Consider the dynamics \eqref{eq:rankone_dynamics} where $|\U|=n$, $\{\blue x_i(0):i\in[n]\} \bigcap \U = \blue\emptyset$, and the matrix pairs $\{(\Lambda(t), A(t))\}_{t=0}^{\infty}$ are i.i.d.\ with $Q:= \expect A(t)$ and $\Gamma := \expect \Lambda(t)$ for all $t \in \N_0$. Also, suppose that $\Gamma< I$. Then the following hold true.
\begin{enumerate} [label={(\roman*)}, leftmargin =0.6cm]
    \item We have $\lim_{t\to \infty} \Pr(\blue x_i(t) = u_j) =  v_{ij}$ for all ${i,j \in [n]}$, where $\{v_{ij}:i,j \in[n]\}$ are the entries of 
    \begin{align*}
        V &:=  (I-\Gamma Q)^{-1}(I- \Gamma)\allone \blue {\mathbf q}^T.
    \end{align*}
    \item There \textit{a.s.\ }exists a random time $T< \infty$ such that ${\blue x_i(t) \in \U}$ for all $t\geq T$. \label{enum:thmPerturb_2nd}
\end{enumerate}
\end{theorem}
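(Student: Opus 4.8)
The plan is to follow the template of the proof of Theorem~\ref{thm:fj_model} almost verbatim: lift~\eqref{eq:rankone_dynamics} to a higher-dimensional \emph{linear} recursion, read off each limiting probability as an entry of a matrix power, and compute that power via a block-triangular identity. The only genuinely new ingredient is the rank-one mutation $C(t)\u$. First I would introduce the augmented state $\y(t):=[\x^T(t)\ \u^T]^T\in\R^{2n}$ and rewrite~\eqref{eq:rankone_dynamics} as $\y(t+1)=B(t)\y(t)$ with
\[
B(t):=\begin{pmatrix}\Lambda(t)A(t) & (I-\Lambda(t))C(t)\\ O & I\end{pmatrix}.
\]
Because $A(t),C(t)$ are binary row-stochastic and $\Lambda(t)$ is binary diagonal, each row of $B(t)$ has exactly one nonzero entry, equal to $1$ (if $\lambda_i(t)=1$ it comes from row $i$ of $A(t)$, otherwise from row $i$ of $C(t)$); hence $B(t)$, and therefore every backward product $B(t:0)$, is binary row-stochastic. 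This is the analogue of Remark~\ref{rem:proof_aid} for~\eqref{eq:rankone_dynamics}, and it implies that each entry of $\y(t)$ is exactly one of the $2n$ entries of $\y(0)=[x_1(0),\dots,x_n(0),u_1,\dots,u_n]^T$.

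For part (i), the hypotheses $|\U|=n$ and $\{x_i(0)\}\cap\U=\emptyset$ guarantee that $u_j$ appears exactly once in $\y(0)$, namely in position $n+j$; therefore the event $\{x_i(t)=u_j\}$ coincides with $\{B_{i,n+j}(t:0)=1\}$, so $\Pr(x_i(t)=u_j)=\E[B_{i,n+j}(t:0)]=\big(\E[B(t:0)]\big)_{i,n+j}$, exactly as in~\eqref{eq:just_expect}. Using independence across time of the triples $(\Lambda(t),A(t),C(t))$ together with their identical distribution, I would factor $\E[B(t:0)]=(\E[B(0)])^t$, and the within-step independence of $\Lambda(t),A(t),C(t)$ inherent in the model yields the block form with top-left block $\Gamma Q$ and top-right block $\E[(I-\Lambda(t))C(t)]=(I-\Gamma)\E[C(t)]=(I-\Gamma)\allone\mathbf{q}^T$; this last computation is where the rank-one structure enters. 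It then remains to evaluate $\lim_{t\to\infty}(\E[B(0)])^t$. Since $\Gamma<I$ forces $\|\Gamma Q\|_\infty=\max_i\gamma_i<1$, the block $\Gamma Q$ is a contraction, so $(\Gamma Q)^t\to O$ and $\sum_{s\ge 0}(\Gamma Q)^s=(I-\Gamma Q)^{-1}$; the block-upper-triangular power formula $\big(\E B(0)\big)^t=\big(\begin{smallmatrix}(\Gamma Q)^t & (\sum_{s<t}(\Gamma Q)^s)(I-\Gamma)\allone\mathbf{q}^T\\ O & I\end{smallmatrix}\big)$ then gives top-right block $(I-\Gamma Q)^{-1}(I-\Gamma)\allone\mathbf{q}^T=V$, and reading off the $(i,n+j)$ entry proves $\lim_t\Pr(x_i(t)=u_j)=v_{ij}$. (Equivalently, one may mirror Theorem~\ref{thm:fj_model}: the expected dynamics has effective prejudice $\allone(\mathbf{q}^T\u)$, so the Proskurnikov--Tempo result gives $\lim_t\E\x(t)=V\u$, and arbitrariness of $\y(0)$ extracts $V$.)

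For part (ii) I would argue directly rather than quote Theorem~\ref{thm:malleable}, whose lift carries the \emph{different} off-diagonal block $I-\Lambda(t)$. The key observation is that $\U^n:=\{\v\in\R^n:v_i\in\U\ \forall i\}$ is absorbing for~\eqref{eq:rankone_dynamics}: if $\x(t)\in\U^n$ then each $x_i(t+1)$ is either a neighbor's state $x_k(t)\in\U$ (when $\lambda_i(t)=1$) or the mutation value $(C(t)\u)_i=u_{J_i}\in\U$ (when $\lambda_i(t)=0$). Moreover, at every step the simultaneous-reset event $\bigcap_{i}\{\lambda_i(t)=0\}$ has the fixed positive probability $\prod_{i}(1-\gamma_i)>0$ (positive and time-invariant precisely because $\Gamma<I$ and the pairs are i.i.d.), and on this event $x_i(t+1)=(C(t)\u)_i\in\U$ for every $i$, i.e.\ $\x(t+1)\in\U^n$. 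Hence the first such time $T$ is a.s.\ finite (it is dominated by a geometric random variable, or one invokes the Second Borel--Cantelli Lemma as in Theorem~\ref{thm:malleable}), and absorption gives $x_i(t)\in\U$ for all $t\ge T$.

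I expect the only delicate point to be the reduction in part (ii): because Theorem~\ref{thm:malleable} is stated for~\eqref{eq:random_fj}, I cannot cite it verbatim and must instead verify that the rank-one mutation still maps $\U^n$ into itself, which holds because $C(t)\u$ is, entrywise, a selection from $\U$. The remaining steps---the binary-stochasticity of $B(t:0)$, the factorization $\E[B(t:0)]=(\E B(0))^t$, and the block-triangular limit---are routine adaptations of the arguments already established for Theorem~\ref{thm:fj_model}.
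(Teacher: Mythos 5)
Your proposal is correct and follows essentially the same route as the paper's own proof: the same block-triangular lift $\y(t+1)=D(t)\y(t)$, the same reduction of $\Pr(x_i(t)=u_j)$ to the $(i,n+j)$ entry of $(\expect D(0))^t$, the same Neumann-series evaluation of the top-right block yielding $V$, and for part (ii) the same simultaneous-reset event of probability $\prod_i(1-\gamma_i)>0$ combined with Borel--Cantelli. Your only departures are cosmetic improvements: you bound $\Gamma Q$ via $\|\Gamma Q\|_\infty=\max_i\gamma_i<1$ rather than the paper's comparison $\Gamma Q\le\gamma_{\max}Q$, and you make explicit the absorption of $\U^n$ (that $C(t)\u$ is entrywise a selection from $\U$), a step the paper leaves implicit.
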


\begin{proof}
We can rewrite the dynamics as ${\blue{\y}(t+1) = D(t)\blue\y(t)}$, where $\blue{\y^T}(t) = \begin{bmatrix}\blue\x^T(t) & \blue\u^T\end{bmatrix}$ and 
\begin{align}
    D(t) &= 
    \begin{bmatrix}
    \Lambda(t) A(t) & (\blue{I}- \Lambda(t))C(t) \\
    O_{n\times n} & I_{n\times n}
    \end{bmatrix}.
\end{align}
So, we have $\blue\y(t) = D(t:0)\blue\y(0)$, for all $t\in \N_0$. Since the elements of $\U$ are distinct and $\{\blue x_i(0)\mid i\in[n]\}\bigcap \U = \blue\emptyset$, similar to ~\eqref{eq:just_expect}, we have $\Pr(\blue x_i(t) = u_j) = \left( \left( \expect D(0) \right)^t \right)_{i\,n+j}$. To compute the limiting marginal probability note that
\begin{align}
    \left( \expect D(0)\right)^t &= 
    \begin{bmatrix}
     \Gamma Q &    (\blue I-\Gamma) \expect C(0) \\
    O_{n\times n} & I_{n\times n}
    \end{bmatrix}^t 
    = 
    \begin{bmatrix}
      \left(\Gamma Q\right)^t &  R(t) \\
     O_{n\times n} & I_{n\times n}
     \end{bmatrix}, 
\end{align}
where ${ R(t) = \sum_{k=0}^{t-1} \left(\Gamma Q\right)^{k} (I-\Gamma) \allone \blue{\mathbf q}^T.}$ 
Since ${Q\in\P_{n\times n}}$, we know that the maximum absolute value of eigenvalues of $\Gamma Q$ is less than $1$ since $\Gamma Q \leq \gamma_{\max} Q$, where $\gamma_{\max}= \max_{i\in[n]} \gamma_i \in (0, 1)$. Using a result on Neumann Series (Eq.~(7.10.11) in \cite{meyer2000matrix}), we have ${\lim_{t \to\infty} R(t) = (I-\Gamma Q)^{-1}(I-\Gamma)\allone \blue{\mathbf q}^T}=V$, and $\lim_{t\to \infty} \left(\Gamma Q\right)^t = O_{n\times n}. $ 

For \ref{enum:thmPerturb_2nd}, note that the probability that all the agents adapt to an external influence at any time is ${\prod_{i=1}^n (1-\gamma_i)>0}$. Since $\{\Lambda(t)\}_{t=0}^\infty$ are independent, it follows from the Second Borel-Cantelli Lemma that there exists an increasing sequence of random times $\{T_k\}_{k=1}^{\infty}$ such that $\Lambda(T_k)=1$. This implies that $\blue x_i(T_1+1) \in \U$ for all $i\in [n]$. Therefore, for all $(t_0,\blue\x(t_0))\in\N_0\times \R^n$, there \textit{a.s.\ }exists a random time $T<\infty$ such that $\blue x_i(t) \in \U$ for all $t \geq T$. 
\end{proof}

\begin{remark}
Suppose, in addition to the assumptions in Theorem~\ref{thm:rankone_perturb}, all the agents have identical susceptibility, i.e., $ \Gamma = \gamma I$ for some $\gamma\in(0,1)$. Then, since ${Q \in\P_{n\times n}}$, we have ${R(t) = \sum_{k=0}^{\infty}  \gamma^k (1-\gamma) \allone \blue{\mathbf q}^T}$, and since ${\gamma \in (0,1)}$, we have ${V = \allone \blue{\mathbf p}^T}$. Furthermore, for this case, the result extends to all stochastic chains $\{Q(t)\}_{t=0}^\infty$ and not just to identically distributed chains, as ${R(t) = \sum_{k=0}^{t} \gamma^k (1-\gamma) Q(k+1:0) \allone \blue{\mathbf q}^T}$ with ${Q(t+1:0)\allone = \allone}$, for all $t\in \N_0$, which implies that ${V= \lim_{t\to\infty}R(t)= \allone \blue{\mathbf q}^T}$. Therefore, in this case, the limiting marginal probability distribution is independent of the degree of susceptibility.  
\end{remark}
\blue{\begin{remark}
Note that the dynamics studied in Sections~\ref{subsec:rv_fj} and \ref{sec:rank_one} can be obtained as special cases of the generalized model studied in \cite{ravazzi2014ergodic}. However, while the results of \cite{ravazzi2014ergodic} imply convergence in distribution and provide the expected values of the steady states, they do not characterize the distributions of these steady states, nor do they show the a.s. convergence of the agents' opinions to the prejudice set. 
\end{remark}
}

\section{SIMULATIONS}\label{sec:simulations}

We now illustrate some of our main results with the help of suitable numerical examples generated using MATLAB. These examples are aimed at facilitating the reader's understanding of the key ideas developed in this work. 

\begin{example} [\textbf{Base Case Dynamics with Ergodicity}]\label{eg:one} Consider Equation~\eqref{eq:main}, the simplest among all of the random adaptation dynamics we have analyzed above. Suppose we have $n=10$ agents in a social network, and for simplicity, suppose that the initial states (or opinions) of the agents are given by $x_i(0)=i$ for all $i\in[n]$.

In order to simulate the case of the expected stochastic chain $\{Q(t)\}_{t=0}^\infty$ being ergodic, we first assume a finite time horizon of $H=1000$, use \emph{randfixedsum.m}~\cite{rand} to randomly generate an indefinitely long (but finite) sequence of $n\times n$ stochastic matrices, select among the generated matrices the first $H$ matrices that are irreducible (i.e., the first $H$ that can be expressed as the adjacency matrices of strongly connected directed graphs), and then set the finite sequence $\{Q(t)\}_{t=0}^{H-1}$ equal to the sequence of the irreducible matrices thus obtained.

Next, we obtain $\{A(t)\}_{t=0}^{H-1}$ as the random matrices generated using the probability distributions defined by $\{Q(t)\}_{t=0}^H$. We then choose an arbitrary realization of $\{A(t)\}_{t=0}^{H-1}$ and plot the corresponding dynamics~\eqref{eq:main} in Figure~\ref{fig:base_case} below.
\end{example}

\begin{figure}[htp]
    \centering
    \includegraphics[scale=0.36]{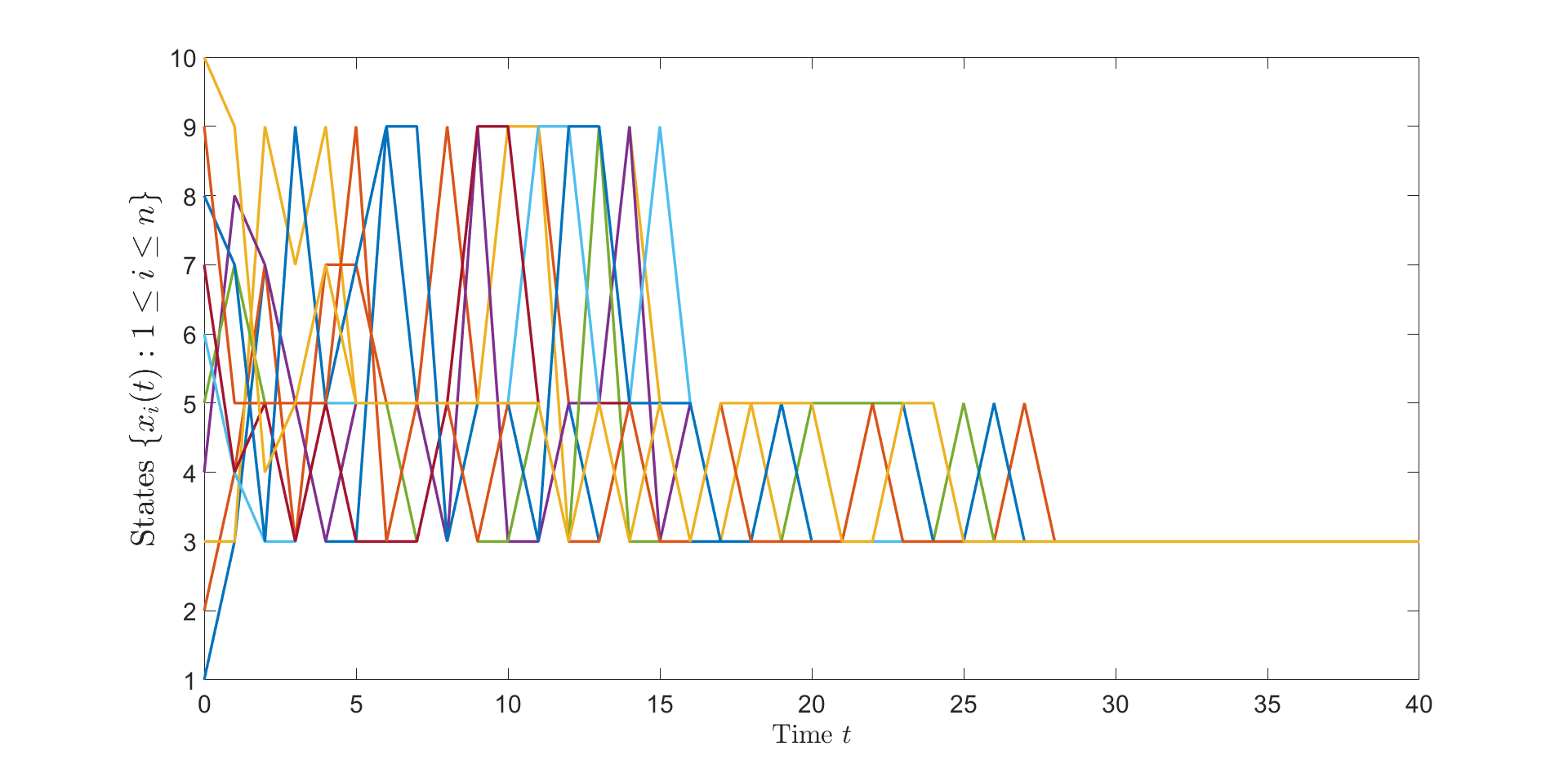}
    \caption{Simulation of the base case dynamics~\eqref{eq:main} with $\{Q(t)\}$ being an ergodic chain}
    \label{fig:base_case}
\end{figure}

In the above example, observe that all the agents' opinions reach the same value within the first $30$ time periods. This is in agreement with Theorem~\ref{thm:main}, which implies that finite agreement occurs almost surely when the expected stochastic chain $\{Q(t)\}=\{\expect A(t)\}$ is ergodic.

\begin{example} [\textbf{Base Case Dynamics without Ergodicity}] We again consider Equation~\eqref{eq:main} and repeat the procedure described in Example~\ref{eg:one}, except that we now generate $\{Q(t)\}_{t=0}^{H-1}$ as follows: we first generate two sequences $\{R(t)\}_{t=0}^{H-1},\{S(t)\}_{t=0}^{H-1}\subset\R^{\frac{n}{2}\times\frac{n}{2} }$ of irreducible matrices, we then generate two arbitrary sequences of random non-negative matrices $\{\Delta_1(t)\}_{t=0}^{H-1},\{\Delta_2(t)\}_{t=0}^{H-1}\subset\R^{\frac{n}{2}\times\frac{n}{2} }$, we let
$$
    \tilde Q(t) = 
    \begin{pmatrix}
    B(t) & \frac{1}{t^2}\Delta_1(t)\\
    \frac{1}{t^2}\Delta_2(t) & C(t)
    \end{pmatrix},
$$
and we finally set $q_{ij}(t) = \frac{\tilde q_{ij}(t) }{\sum_{k=1}^n \tilde q_{ik}(t)}$ for each $i,j\in [n]$ so as to obtain $Q(t)$ as a row-stochastic matrix. The decay factor of $\frac{1}{t^2}$ ensures that the influence of the agent subsets $\{1,2,\ldots, \frac{n}{2}\}$ and $\{\frac{n}{2} ,\frac{n}{2}+1,\ldots, n\}$ on each other diminishes with time fast enough to make $\{Q(t)\}_{t=0}^{H-1}$ a good approximation of a non-ergodic chain. 

Next, we obtain $\{A(t)\}_{t=0}^{H-1}$ as the random matrices generated using the probability distributions defined by $\{Q(t)\}_{t=0}^H$. We then choose an arbitrary realization of $\{A(t)\}_{t=0}^{H-1}$ and plot the corresponding dynamics~\eqref{eq:main} in Figure~\ref{fig:non-ergodic} below.
\end{example}

\begin{figure}[htp]
    \centering
    \includegraphics[scale=0.36]{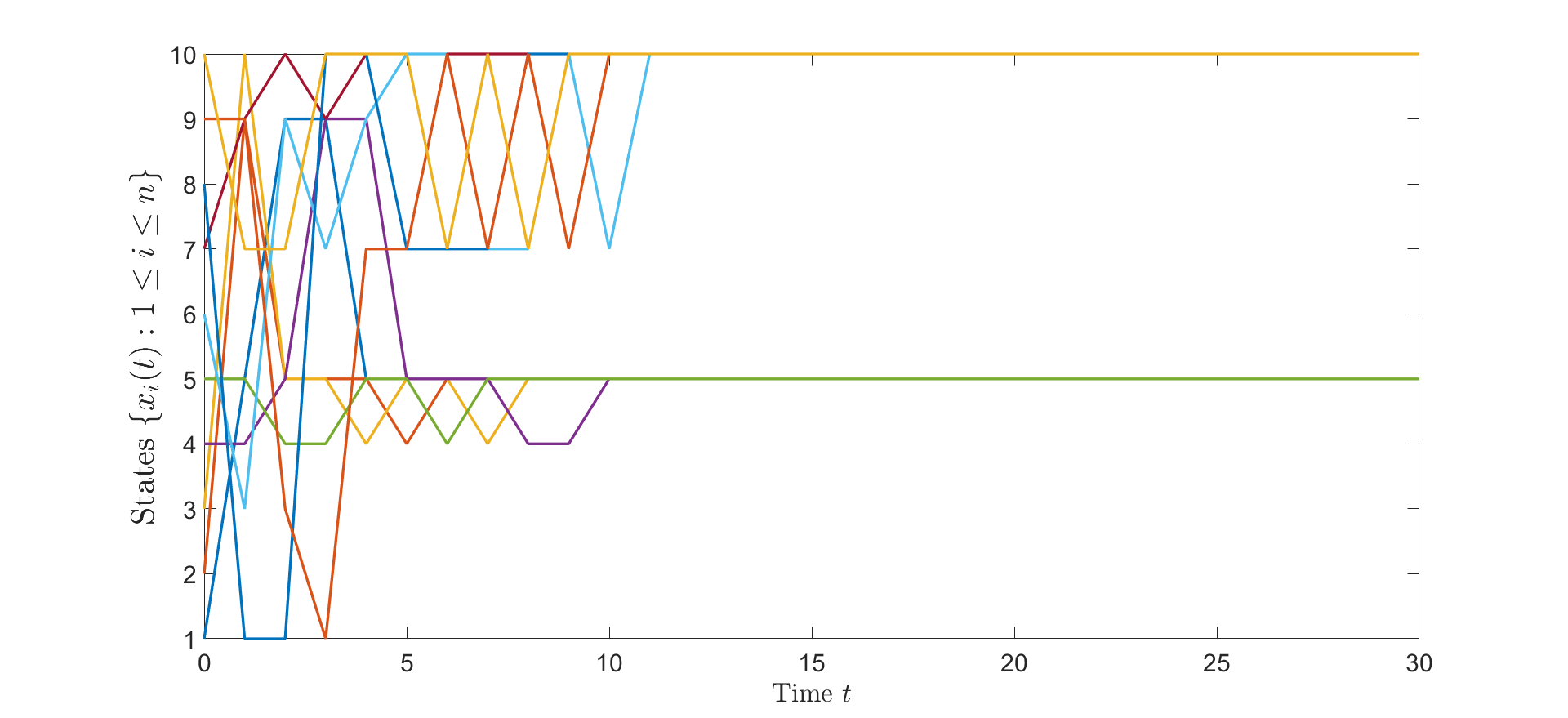}
    \caption{Simulation of the base case dynamics~\eqref{eq:main} with $\{Q(t)\}$ being a non-ergodic chain}
    \label{fig:non-ergodic}
\end{figure}

In the above example, two clusters of agents reach two distinct consensus values within the first $15$ time periods. This is consistent with Theorem~\ref{thm:main}, which states that finite agreement almost surely does not occur in the base case dynamics if $\{Q(t)\}$ is not an ergodic chain. Moreover, it can be shown that the principal submatrices of $\tilde Q(t)$ corresponding to the coordinates $\{1,2,\ldots,\frac{n}{2}\}$ form an ergodic stochastic chain. The same can be said about the principal submatrices corresponding to the complementary set $\{\frac{n}{2},\ldots,n-1, n\}$, which, along with Theorem~\ref{thm:main}, explains why exactly two consensus clusters are formed in the associated random adaptation dynamics.

\begin{example} [\textbf{Comparison of the Base Case Random Adaptation Dynamics with the Expected Dynamics}] We now repeat the setup of Example~\ref{eg:one} and plot in Figure~\ref{fig:comparison} the evolution of $\x(t)$ averaged over $1000$ different realizations of $\{A(t)\}_{t=0}^{H-1}$ as a set of $n$ line plots. To compare the resulting plots with the associated expected dynamics, i.e., the classical non-random averaging dynamics defined by~\eqref{eqn:averaging}, we simulate~\eqref{eqn:averaging} and plot the corresponding state evolution of the $n$ agents using solid red square markers whose color intensity varies with the agent index $i\in[n]$.
\end{example}
\begin{figure}[htp]
    \centering
    \includegraphics[scale=0.36]{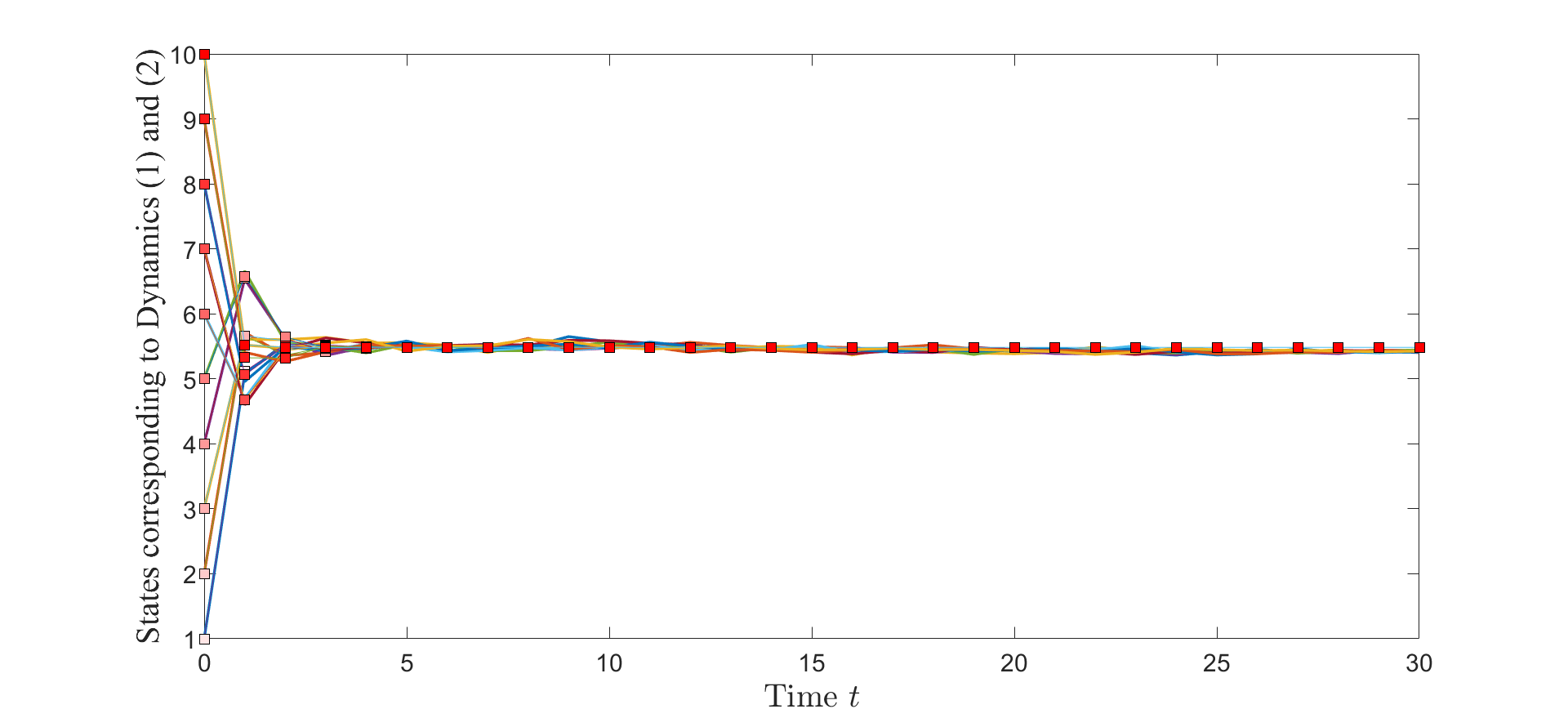}
    \caption{Comparison of the Empirical Mean of the Base Case Dynamics with the Expected Dynamics~\eqref{eqn:averaging}}
    \label{fig:comparison}
\end{figure}

In Figure~\ref{fig:comparison}, we observe that the line plots and the square marker plots exhibit finite agreement and also overlap with each other almost perfectly. This is consistent with Theorem~\ref{thm:main} and with the observation that 
$$
    \E[\x(t)]=\E[A(t:t_0)\x(t_0)]=\E[A(t:t_0)]\x(t_0) =  Q(t:t_0)\x(t_0),
$$
where the last step follows from the mutual independence of $\{A(t)\}_{t=0}^\infty$. This observation means that the distributed averaging dynamics defined by~\eqref{eqn:averaging} is nothing other than the expectation of the random adaptation dynamics defined by~\eqref{eq:main}. Hence, the empirical average of $\x(t)$ plotted using solid lines must be a good approximation for  the averaging dynamics~\eqref{eqn:averaging}.

\begin{example} [\textbf{Random Adaptation Variant of Friedkin-Johnsen Dynamics}] Finally, we simulate~\eqref{eq:random_fj}, the random adaptation variant of the Friedkin-Johnsen model. We first set $n=10$ and  generate $\{Q(t)\}_{t=0}^{H-1}=\{\expect A(t)\}_{t=0}^{H-1}$ using the procedure described in Example~\ref{eg:one}. We then generate $\{\gamma_i(0):i\in [n]\}$ as random variables uniformly distributed over the set $[0,1]$, and we make sure that $0<\gamma_i(0)<1$ for all $i\in[n]$. Next, we set $\gamma_i(t)=\gamma_i(0)$ for all $i\in[n]$ and all $t\in\{0,1,\ldots, H-1\}$ so as to ensure that $\{\Lambda(t)\}_{t=0}^{H-1}$ are i.i.d. random matrices. As the next step, we generate $\lambda_i(t)=\text{Bernoulli}(\gamma_i(0))$ for each $i\in[n]$ and $t\in\{0,1,\ldots, H-1\}$.

As for the initial opinions and prejudices, we choose $u_i=i+20$ and $x_i(0)=i$ for all $i\in [n]$ to ensure that $\{x_i(0):i\in [n]\}\bigcap \mathcal U=\emptyset$. We then plot the results of our simulation in Figure~\ref{fig:fj} below.
\end{example}

\begin{figure}[htp]
    \centering
    \includegraphics[scale=0.36]{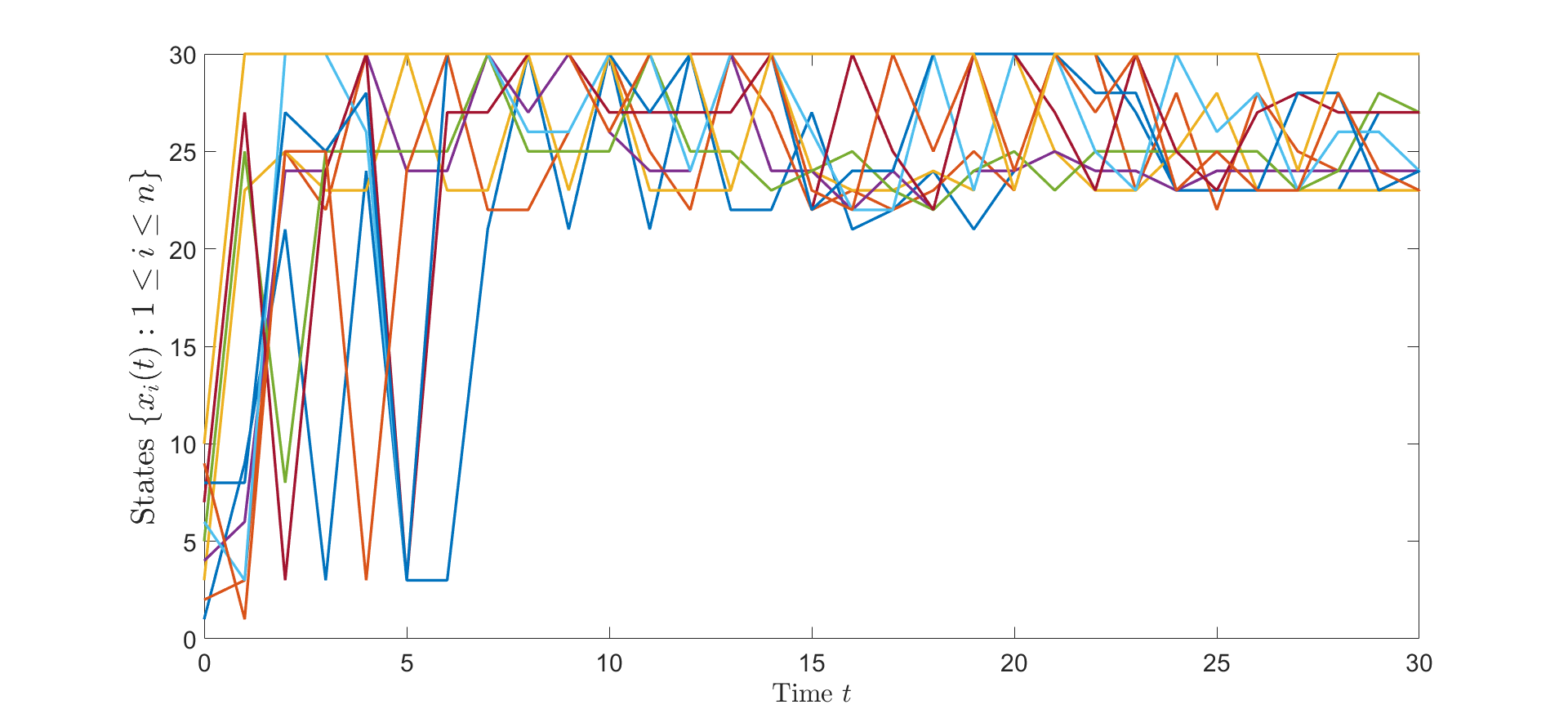}
    \caption{Simulation of the Random Adaptation Variant~\eqref{eq:random_fj} of the Friedkin-Johnsen Model}
    \label{fig:fj}
\end{figure}

As seen from Figure~\ref{fig:fj}, although the initial opinions (states) of all the agents lie in the set $\{1,2,\ldots, 10\}$, all the agents eventually enter the prejudice set $\{21,22,\ldots,30\}$. This happens because $\{\Lambda(t)\}_{t=0}^{H-1}$ are i.i.d., which implies that $[n]$ is a set of simultaneously malleable agents that dominate  in expectation trivially. Therefore, by Theorem~\ref{thm:malleable}, all the initial opinions of the agents get erased within a finite time-span.

In summary, all of our simulation results are consistent with, and hence validate, our main theoretical results.

\section{CONCLUSION}\label{sec:conclusion}
We proposed and studied a random adaptation variant of time-varying distributed averaging dynamics in discrete time. We have shown that our models give rise to novel interpretations for the concepts of ergodicity and absolute probability sequences, both of which are pivotal to the study of stochastic chains. We have also proposed a time-varying, stochastic analog of the well-known Friedkin-Johnsen opinion dynamics and analyzed the asymptotic behavior of the probability distributions of the agents' opinions. Finally, we have considered a rank-one perturbation of our base-case stochastic dynamics and studied its asymptotic behavior.

\blue{Our random adaptation interpretation of averaging (and related) dynamics opens up many avenues avenues for further investigations including the  (time-varying) controlled variation of Friedkin-Johnsen dynamics through the lens of random adaptation dynamics, and the connection of mutation-adaptation learning dynamics (in game theoretic setting) and our proposed random adaptation schemes.}

\section*{Appendix}

\begin{lemma} \label{lem:positive_correlation}
 For the dynamics~\eqref{eq:main} \blue{  with $Q(t)=\expect A(t)$}, the following holds for all $S\subseteq [n]$, $\ell\in[n]$, $t\in\N_0$ and $\Delta \in\N$
 \begin{align}\label{eq:pos_cor}
     \Pr\left(\bigcap_{i\in S} \left(A(t+\Delta:t)\right)_{i\ell}=1\right)\geq \prod_{i\in S}Q_{i\ell}(t+\Delta: t).
 \end{align}
 \end{lemma}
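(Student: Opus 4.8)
The plan is to prove the inequality by induction on $\Delta$. First I would observe that, since the matrices $\{A(s)\}$ are independent, $\E[A(t+\Delta:t)]=Q(t+\Delta:t)$, so each entry $Q_{i\ell}(t+\Delta:t)$ equals the marginal probability $\Pr\!\big((A(t+\Delta:t))_{i\ell}=1\big)$. Thus~\eqref{eq:pos_cor} is exactly a positive-correlation statement: the joint probability that the rows indexed by $S$ of the backward product all place their unique $1$ in column $\ell$ dominates the product of the corresponding marginals. For the base case $\Delta=1$ we have $A(t+1:t)=A(t)$, whose rows are independent, so the two sides of~\eqref{eq:pos_cor} coincide and the claim holds with equality.

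For the inductive step, assume the statement for products of length $\Delta-1$ and factor the most recent matrix out on the left: $A(t+\Delta:t)=A(t+\Delta-1)\,B$ with $B:=A(t+\Delta-1:t)$. Since $A(t+\Delta-1)$ is binary stochastic, row $i$ contains a single $1$, located in a random column $K_i$, so that $(A(t+\Delta:t))_{i\ell}=B_{K_i\ell}$ and the event of interest becomes $\bigcap_{i\in S}\{B_{K_i\ell}=1\}$. The selectors $\{K_i\}_{i\in S}$ are independent across $i$ (the rows of $A(t+\Delta-1)$ are independent) with $\Pr(K_i=k)=Q_{ik}(t+\Delta-1)$, and $B$ is independent of $A(t+\Delta-1)$, hence of the $\{K_i\}$. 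I would then condition on $\{K_i=k_i:i\in S\}$: the conditional law of $B$ is unchanged, and writing $T:=\{k_i:i\in S\}$ for the image set, the event reduces to $\bigcap_{k\in T}\{B_{k\ell}=1\}$, to which the induction hypothesis applies and yields the lower bound $\prod_{k\in T}Q_{k\ell}(t+\Delta-1:t)$. Collapsing the (possibly repeated) row indices can only raise a product of factors in $[0,1]$, so this is at least $\prod_{i\in S}Q_{k_i\ell}(t+\Delta-1:t)$. Taking expectation over the $\{K_i\}$ and using their independence to factor the product, each factor becomes $\sum_k Q_{ik}(t+\Delta-1)Q_{k\ell}(t+\Delta-1:t)=Q_{i\ell}(t+\Delta:t)$ by the semigroup identity $Q(t+\Delta:t)=Q(t+\Delta-1)Q(t+\Delta-1:t)$, which closes the induction.

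I expect the crux to be the \emph{coalescence} phenomenon: distinct rows of $S$ may select the same column $K_i$, so the image set $T$ can be strictly smaller than $S$. The argument works precisely because discarding the duplicated factors (all lying in $[0,1]$) pushes the bound in the favorable direction, which is what converts the conditional induction hypothesis into the desired product over all of $S$. The only other point requiring care is the independence bookkeeping: confirming that $B=A(t+\Delta-1:t)$ is independent of the selected columns $\{K_i\}$, so that the induction hypothesis may legitimately be invoked under the conditioning with $B$'s unconditional law, and that the row-independence of $A(t+\Delta-1)$ lets the final expectation factorize across $i\in S$.
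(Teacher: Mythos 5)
Your proof is correct and follows essentially the same route as the paper's: induction on $\Delta$, peeling off the most recent matrix in the backward product, conditioning on its row selections (which are independent across rows and independent of the remaining product), invoking the induction hypothesis on that remaining product, and handling coalescing selections by noting that discarding repeated factors in $[0,1]$ can only increase the product before factorizing the expectation and recombining via $Q(t+\Delta:t)=Q(t+\Delta-1)Q(t+\Delta-1:t)$. The only differences are cosmetic: the paper writes the selections as $\sigma_i$ and reduces to the distinct-index set $W$ (with some time-index typos in its notation), whereas you use selectors $K_i$ and the image set $T$.
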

 \begin{proof}
 We use induction on $\Delta$.  For $\Delta=1$, the assertion follows from the independence of the rows of $A(t)$.

 Assume now that the assertion holds for some $\Delta\geq 1$. Let $S\subseteq[n]$ be a given set. W.l.o.g.\ we assume $S=[v]$ for some $v\in[n]$. In addition, for each $i\in[n]$, let $\sigma_i(t)$ denote the random index such that $a_{i\sigma_i(t)}(t)=1$, let $\tilde \sigma(t):=(\sigma_1(t),\ldots, \sigma_v(t))$ and let $\tilde \alpha=(\alpha_1,\ldots, \alpha_v)$ be a realization of $\tilde{\sigma}(t)$. Let ${W:=\{i\in S: \alpha_i\notin\{\alpha_1,\ldots, \alpha_{i-1}\}\}}$ index all the distinct $\alpha_i$s. Then, for $\Delta+1$, we have 
 $\bigcap_{i\in S}\left\{A_{i\ell}(t+\Delta+1:t)=1\right\}=\bigcap_{i\in S}\{A_{i\ell}(t+\Delta+1:t)=1\}\cap\{\sigma(t) = \tilde \alpha\}$.
 Since, $A(t)$ has independent rows and because $\{(A(t+\Delta+1:t))_{i\ell}=1\}=\{ (A(t+\Delta:t))_{\alpha_i\ell}=1\}$ conditional on the event $\{\sigma_i(t)=\alpha_i\}$, we have 
 \begin{align*}
     \Pr\left(\bigcap_{i\in S}\left\{A_{i\ell}(t+\Delta+1:t)=1\right\}\right)&{=}\sum_{\tilde \alpha\in[n]^v}\bigg(\Pr(\bigcap_{i\in S}\{ A_{\alpha_i \ell}(t+\Delta:t)=1 \}\mid \tilde \sigma(t)=\tilde \alpha)\cr
     &\quad\quad\quad\quad\quad\times\prod_{j\in S} q_{j\sigma_j(t)}(t)\bigg)\cr
     &\stackrel{(a)}{=}\sum_{\tilde \alpha\in[n]^v}\Pr(\bigcap_{i\in S}\{ A_{\alpha_i \ell}(t+\Delta:t)=1 \})\cdot\prod_{j\in S} q_{j\sigma_j(t)}(t)\cr
     &=\sum_{\tilde \alpha\in[n]^v}\Pr(\bigcap_{i\in W }\{ A_{\alpha_i \ell}(t+\Delta:t)=1 \})\cdot\prod_{j\in S} q_{j\sigma_j(t)}(t)\cr    &\stackrel{(b)}{\geq} \sum_{\tilde \alpha\in [n]^v} \prod_{i\in W}  Q_{\alpha_i \ell}(t+\Delta:t) \cdot\prod_{j\in S} q_{j\sigma_j(t)}(t)\cr
     &\stackrel{(c)}{\geq} \sum_{\tilde \alpha\in [n]^v} \prod_{i\in S}  Q_{\alpha_i \ell}(t+\Delta:t) \cdot\prod_{j\in S}  q_{j\sigma_j(t)}(t)
 \end{align*}
 where $(a)$ holds because $\{A(\tau)\}_{\tau=0}^\infty$ are independent, $(b)$ follows from our inductive hypothesis, and $(c)$ holds because $W\subseteq V$ and because all matrix entries lie in $[0,1]$. Noting that the last expression is simply  $\prod_{i\in S}  Q_{i\ell}(t+\Delta+1:t)$ completes the proof. 
\end{proof}

\bibliographystyle{ieeetr}
\bibliography{bib}

\end{document}
